\documentclass[11pt,reqno]{amsart}
\usepackage[T1]{fontenc}
\usepackage[utf8]{inputenc}
\usepackage{lmodern}
\usepackage[a4paper,textwidth=6.1in,textheight=8.8in,centering]{geometry}
\usepackage{amsmath,amssymb,mathtools,mathrsfs}
\usepackage{microtype}
\usepackage{enumitem}
\usepackage{xcolor}
\usepackage[unicode,colorlinks=true,linkcolor=blue!45!black,
  citecolor=blue!45!black,urlcolor=blue!45!black,breaklinks=true]{hyperref}
\hypersetup{pdftitle={Meromorphic solutions of first-order differential equations with rational exponential coefficients},
  pdfauthor={Deguang Zhong},pdfsubject={Private research draft on Gundersen Question 6.2},
  pdfkeywords={Malmquist theorem, meromorphic solution, exponential coefficient, ramification, rational surface}}
\allowdisplaybreaks[1]
\numberwithin{equation}{section}
\setlist[enumerate]{label=\textup{(\roman*)},leftmargin=2.2em,itemsep=3pt}
\newtheorem{theorem}{Theorem}[section]
\newtheorem{proposition}[theorem]{Proposition}
\newtheorem{lemma}[theorem]{Lemma}
\newtheorem{corollary}[theorem]{Corollary}
\theoremstyle{definition}

\newtheorem{example}[theorem]{Example}
\theoremstyle{remark}
\newtheorem{remark}[theorem]{Remark}
\newcommand{\C}{\mathbb C}
\newcommand{\Z}{\mathbb Z}
\newcommand{\N}{\mathbb N}
\newcommand{\Q}{\mathbb Q}
\newcommand{\PP}{\mathbb P}
\newcommand{\OO}{\mathcal O}
\newcommand{\MM}{\mathcal M}
\newcommand{\Erad}{\mathscr E_{\mathrm{rad}}}
\newcommand{\ord}{\operatorname{ord}}

\newcommand{\dd}{\,\mathrm d}
\newcommand{\doi}[1]{\href{https://doi.org/#1}{\nolinkurl{doi:#1}}}
\makeatletter
\renewcommand{\@settitle}{\begin{center}\baselineskip=18pt\relax
  {\Large\bfseries\@title\par}\end{center}}
\makeatother

\title[Meromorphic solutions with exponential coefficients]
{Meromorphic solutions of first-order differential equations\\
with rational exponential coefficients}

\author{Zhong Deguang}
\address{Institute of Applied Mathematics, Shenzhen Polytechnic University,
Shenzhen 518055, China}
\email{huachengzhon@163.com}

\author{Meng Fanning\textsuperscript{*}}
\address{School of Mathematics and Information Science, Guangzhou University,
Guangzhou, Guangdong 510006, P.~R. China}
\email{mfngzhu@gzhu.edu.cn}

\author{Yuan Wenjun}
\address{School of Basic and General Education,
Guangzhou University of Software, Guangzhou 510990, P.~R. China}
\email{wjyuan1957@126.com}

\thanks{\textsuperscript{*}Corresponding author: Meng Fanning.}

\date{9 September 2026}
\subjclass[2020]{Primary 34M05; Secondary 30D35, 14H30, 14E05}
\keywords{Meromorphic solution, Malmquist theorem, exponential coefficient,
rational surface, differential function field, ramification}

\begin{document}

\begin{abstract}
We study first-order differential equations $f'=R(e^z,f)$, where
$R\in\C(t,w)$. We prove that a meromorphic solution on the whole complex
plane is algebraic over $\C(e^z)$ unless $R$ is a polynomial of degree at most
two in its second variable. Such an algebraic solution necessarily has the
form $S(e^{z/q})$, with $S$ rational and $q$ a positive integer, giving an
affirmative answer to Question~6.2 in Gundersen's collection. The geometric
input is Guillot's theorem on single-valued trajectories of meromorphic
vector fields on surfaces. The additional argument compares the fibration
provided by that theorem with the original exponential coordinate. A
ramification calculation forces every finite nonzero branch value $a$ to
satisfy $Da=a$, which excludes such a value and reduces the comparison to
a two-point cover. Divisor divisibility and relative algebraic closedness
then recover a Riccati equation over the original coefficient field. No
growth hypothesis is imposed. We also identify the algebraic degree with
the least integer deck period and describe the growth and value fibres of
the resulting solutions.
\end{abstract}

\maketitle

\section{Introduction}
\label{sec:introduction}

The global single-valuedness of a solution of a complex differential
equation imposes restrictions that are not visible from local existence
theory. The classical theorem of Malmquist~\cite{Malmquist1913} is a
fundamental example. If $R(z,w)$ is rational in both variables and
$f'=R(z,f)$ has a transcendental meromorphic solution in $\C$, then $R$ is
a polynomial of degree at most two in $w$. Thus the equation is Riccati,
with the linear and constant-in-$w$ cases included. Differential-field
formulations and further developments are discussed by
Eremenko~\cite{Eremenko1982} and Laine~\cite{Laine1993}.

Replacing rational coefficients in $z$ by rational coefficients in $e^z$
changes the natural conclusion. The relevant algebraic functions need not
be rational in $e^z$ itself. For example,
\begin{equation}
 f(z)=e^{-z/2},\qquad f'=-\frac12 e^z f^3.
 \label{eq:historical-example}
\end{equation}
The half-exponential is unavoidable in this example. It suggests working
with finite root extensions of the exponential function field.

Gundersen~\cite[Question~6.2]{Gundersen2017} asks whether every global
meromorphic solution of
\begin{equation}
 f'(z)=R(e^z,f(z)),\qquad R\in\C(t,w),
 \label{eq:main-equation}
\end{equation}
has the form $S(e^{cz})$, with $S$ rational and $c\in\Q$, provided that the
equation is neither Riccati nor linear. In his formulation, the question
is attributed to Gundersen and Laine~\cite{GundersenLaine1985}; their
paper appeared in 1985. Their Theorem~2 treats a structured polynomial
subclass and shows that its meromorphic solutions are Laurent polynomials
in $e^{z/m}$ for a positive integer $m$~\cite[Theorem~2]{GundersenLaine1985};
Gundersen's collection also points to Rieppo's Theorem~4.8 for another partial
result~\cite{Rieppo1998}. The 2023 survey of Heittokangas, Ishizaki,
Tohge and Wen both summarize the earlier theorem and restate, as an
open problem, Question~6.2 in the rational-coefficient setting
\cite[Section~4.4]{HeittokangasEtAl2023}.
The present argument compares two fibrations of the same rational
surface, retaining the original exponential coordinate throughout.

Throughout the paper, ``meromorphic'' means meromorphic on the whole
complex plane unless a different domain is specified. A rational
expression is always interpreted after cancellation of common factors.
We say that $R\in\C(t,w)$ is of \emph{Riccati type} if
\begin{equation}
 R(t,w)=A_2(t)w^2+A_1(t)w+A_0(t),
 \qquad A_j\in\C(t).
 \label{eq:riccati-type}
\end{equation}
This terminology includes $A_2=0$ and the equations independent of $w$.
Let
\begin{equation}
 \Erad=\bigcup_{q\geq1}\C(e^{z/q})\subset\MM(\C).
 \label{eq:radical-field}
\end{equation}
The union is a field: any two of its members belong to the field indexed
by a common multiple of their denominators.

\begin{theorem}
\label{thm:main}
Suppose that $R\in\C(t,w)$ is not of Riccati type. Every meromorphic
solution of \eqref{eq:main-equation} belongs to $\Erad$. More explicitly,
for each such solution there are $q\in\N$ and $S\in\C(u)$ such that
\begin{equation}
 f(z)=S(e^{z/q}).
 \label{eq:main-conclusion}
\end{equation}
Equivalently, if a meromorphic solution of \eqref{eq:main-equation} is
transcendental over $\C(e^z)$, then $R$ has the form
\eqref{eq:riccati-type}.
\end{theorem}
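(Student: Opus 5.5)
We plan to follow the strategy announced in the abstract: geometrize, apply Guillot's theorem, and compare fibrations. Put $t=e^z$. A meromorphic solution $f$ gives an entire map $z\mapsto (e^z,f(z))$, which is a single-valued trajectory of the rational vector field
\[
X = t\,\partial_t + R(t,w)\,\partial_w
\]
on a compactification of $\C^*\times\C$ (for instance $\PP^1\times\PP^1$, suitably blown up). Suppose $f\notin\Erad$. We first show that $f$ is then transcendental over $\C(e^z)$. Indeed, if $f$ were algebraic over $\C(e^z)$, the Puiseux expansions of $f$ at $t=0,\infty$ together with single-valuedness in $z$ would force $f=S(e^{z/q})$. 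Here the monodromy around $t=0$ is $z\mapsto z+2\pi i$, and its finite order gives $q$. Moreover, $f$ has no other branch points, since the map $e^z$ is unramified on $\C^*$. So it suffices to prove the last, equivalent sentence of Theorem~\ref{thm:main}: if $f$ is transcendental over $\C(e^z)$, then $R$ is of Riccati type.

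For that, assume $f$ is transcendental, so the trajectory is Zariski dense in the surface. Guillot's theorem then applies, and we use it in the following form: a rational vector field on a projective surface with a Zariski-dense single-valued trajectory is, after a birational change of model, either (a) preserves a rational fibration $\pi$ whose generic fibre is $\PP^1$ and on which $X$ acts by Riccati (projective) transformations, or (b) belongs to a short list of exceptional cases. These are elliptic, linear-torus or Halphen-type cases, and they can be excluded because $X$ has the transverse component $t\partial_t$ with only exponential, degree-zero-in-$w$ time behaviour. The real issue is that $\pi$ need not be the projection to $t$.

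Next we compare $\pi$ with $t$. The field $\C(t)$ is $X$-invariant, since $Dt=t$ with $D$ the derivation induced by $X$. The base of $\pi$ gives another invariant subfield $\C(s)$ with $Ds\in\C(s)$, and $X$ induces a vector field on the base curve along which $s(z)$ is single-valued. If $\C(s)=\C(t)$ we are done after relative algebraic closedness (below). Otherwise, consider $\C(s,t)$. Since $f$ is transcendental, $\C(s,t)$ is either all of $\C(t,w)$ or has transcendence degree one. In the first case $t$ is constant on fibres, which is impossible unless $\pi$ and $t$ coincide. So $s$ and $t$ are algebraically dependent, and we obtain a compact curve $C$ covering both base lines. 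On $C$ the derived vector field pulls back from $t\partial_t$ on $\PP^1_t$.

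The main obstacle is this ramification step. Let $a\neq0,\infty$ be a branch value of $C\to\PP^1_t$. Single-valuedness of the trajectory forces the vector field to vanish there to compatible order, and since $t\partial_t$ is nonvanishing at $a$ this gives $Da=a$ with $a$ constant, i.e. $a=0$, a contradiction. Hence $C\to\PP^1_t$ is a cover branched only over $0$ and $\infty$, namely $t=u^q$. Consequently $s\in\C(t^{1/q})$, and the Riccati structure of $X$ along $\pi$ is defined over $\C(u)$ with $u=e^{z/q}$.

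We then descend to $\C(t)$. Look at the fibre coordinate of $\pi$ as an element of $\C(u)(w)$ of degree one in $w$; the divisor conditions (a Riccati field has poles of order at most $2$ in a fibre coordinate) transfer through this Möbius change. The Galois group $\Z/q$ acts by $u\mapsto\zeta u$ and permutes the possible fibre coordinates. Divisibility of the pole divisors, combined with the fact that $\C(t)$ is relatively algebraically closed in $\C(t,w)$, shows that the $\pi$-fibration is the $t$-projection up to a Möbius change in $w$ over $\C(u)$. Being Riccati in $w$ is invariant under such a change, so $R$ has degree at most $2$ in $w$ over $\C(u)$, hence also over $\C(t)$, since $R\in\C(t,w)$. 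This is the desired contradiction and proves the theorem.
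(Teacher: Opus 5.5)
\emph{There is a genuine gap in the comparison step.} Your reduction to the transcendental case is fine: an algebraic, single-valued $f$ has a finite deck period and so equals $S(e^{z/q})$. Your treatment of the case $\C(s)=\C(t)$ is also fine. The problem is the case in which the Guillot base coordinate $s$ is algebraically independent of $t$. You dismiss it by saying that then ``$t$ is constant on fibres'', but this is backwards.

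The fibres of $\pi$ are level sets of $s$, and $t$ is constant on them exactly when $t$ is algebraic over $\C(s)$. That is the \emph{dependent} case. (Transcendence degree two also does not give $\C(s,t)=\C(t,w)$.) Nothing geometric forces $\pi$ to be the $t$-projection. For example, take $f'=\mu f$ with $\mu\notin\Q$. Then $e^{\mu z}$ is transcendental over $\C(e^z)$, and the $w$-projection is a preserved, pole-free fibration independent of $t$. Showing that independence forces Riccati type is the entire content of the paper's comparison.

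Conversely, the dependent case that you treat as the main obstacle is trivial. $\C(s)$ is relatively algebraically closed in $L$ because the fibres are connected. $\C(t)$ is relatively algebraically closed in $\C(t,w)$ by Lemma~\ref{lem:relative}. So dependence already gives $\C(s)=\C(t)$, your curve $C$ is isomorphic to both base lines, and it carries no ramification. Likewise, any $s\in L$ algebraic over $\C(t)$ already lies in $\C(t)$, so the conclusion $s\in\C(t^{1/q})$ carries no information.

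\emph{The missing idea is a ramification analysis of $t$ on the generic fibre $\Gamma$ over $K=\C(s)$, not on a base curve.} The steps are as follows.
\begin{enumerate}
\item The branch values $a$ of $t:\Gamma_{\overline K}\to\PP^1_{\overline K}$ lie in $\overline K$, not in $\C$. Regularity of $D$ gives $Da=a$ (Lemma~\ref{lem:branch}).
\item This alone does not yield $a=0$; you need the trajectory. A local branch satisfies $a(\iota(s))=Ce^z$, which produces a polynomial relation between $s$ and $t$ (Lemma~\ref{lem:no-finite-branch}). This contradicts independence.
\item Riemann--Hurwitz then forces genus zero. After descending the two points over $0$ and $\infty$, one gets $t=c(s)u^d$ over $K$ itself.
\item Normalize $\iota(s)$ to $z$ or $e^{\lambda z}$ (Lemma~\ref{lem:autonomous}). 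Meromorphy of $\iota(u)$ then gives $c=c_0r^d$, respectively $c=c_0s^kr^d$.
\item Relative algebraic closedness of $\C(t)$ in $\C(t,w)$ forces $d=1$, respectively $\gcd(k,d)=1$.
\item A unimodular monomial change gives $L=\C(t)(y)$ with $Dy=1$ or $Dy=\kappa y$, $\kappa\in\C$. Riccati type follows by Corollary~\ref{cor:change-generator}.
\end{enumerate}

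This route also does not need your stronger reading of Guillot's theorem, namely rational fibres with a projective action. The paper uses only a preserved fibration with vertical poles and leaves the fibre genus to Riemann--Hurwitz. It excludes the Abelian-surface alternative by birational invariance of irregularity, not by an appeal to ``exponential time behaviour''.
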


No finite-order assumption occurs in this statement. Nor is periodicity
or any restriction on three value fibres assumed. Those properties are
consequences of the conclusion, rather than inputs to the proof.

There are two distinct assertions in Theorem~\ref{thm:main}. The first
is algebraicity over the original coefficient field $\C(e^z)$. The
second is the classification of the single-valued functions that are
algebraic over that field. The latter admits a useful exact version.

\begin{theorem}
\label{thm:primitive-intro}
Let $f\in\MM(\C)$ be algebraic over $\C(e^z)$, and set
\[
 m=[\C(e^z,f):\C(e^z)].
\]
Then $m$ is the least positive integer for which
\begin{equation}
 f(z+2\pi i m)=f(z).
 \label{eq:primitive-period}
\end{equation}
Moreover,
\begin{equation}
 \C(e^z,f)=\C(e^{z/m}).
 \label{eq:primitive-field}
\end{equation}
In particular, $f=S(e^{z/m})$ for a rational function $S$.
\end{theorem}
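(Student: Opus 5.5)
Let $K=\C(e^z)$, $L=K(f)$ and $m=[L:K]$. The plan is to work with the translation $\sigma\colon g(z)\mapsto g(z+2\pi i)$, which acts on $\MM(\C)$ as a field automorphism fixing $K$ pointwise.

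\emph{Step 1: $\sigma$ generates the Galois group of $K(e^{z/N})/K$.} Since $f$ is algebraic over $K$, it has a minimal polynomial $P(X)\in K[X]$ of degree $m$. Clearing denominators gives $P\in\C[t][X]$, irreducible in $\C(t)[X]$. The set $\{\sigma^k f\}$ consists of roots of $P$ viewed as meromorphic functions, so it is finite. Hence $f$ has some period $2\pi i N$ with $N\ge1$.

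Write $u=e^{z/N}$. Then $f$ is $2\pi i N$-periodic, and it is algebraic over $\C(u^N)$. We want $f=S(u)$ with $S$ rational. To see this, regard $f$ as a single-valued meromorphic function $F$ of $u\in\C^*$. Algebraicity over $\C(u^N)$ gives a polynomial equation $P(u^N,F(u))=0$. Then $F$ is algebraic over $\C(u)$ and single-valued on $\C^*$.

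At $u=0$ and $u=\infty$, the Puiseux branches of the algebraic curve $P(u^N,w)=0$ have at most algebraic growth. Since $F$ is single-valued near these points, its branch there has no ramification, so $F$ has at most poles at $0$ and $\infty$. Thus $F$ is meromorphic on $\PP^1$, hence rational. This is the key analytic step. Equivalently, one can show that a single-valued branch of an algebraic function on a punctured disc cannot have an essential singularity, because $|F|$ is bounded by a power of $|u|^{\pm1}$ via the coefficient bound for the roots of $P$.

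So $L\subset\C(u)$. The extension $\C(u)/K$ is Galois and cyclic of order $N$, generated by $\sigma\colon u\mapsto e^{2\pi i/N}u$.

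\emph{Step 2: identify $L$ and $m$.} By Galois correspondence for a cyclic group, the intermediate field $L$ equals the fixed field of the subgroup $\langle\sigma^{m'}\rangle$, where $m'=[L:K]=m$. That fixed field is exactly $\C(u^{N/m})=\C(e^{z/m})$, which gives \eqref{eq:primitive-field}.

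Now $L$ is generated over $K$ by $f$. Therefore $\sigma^k$ fixes $L$ if and only if $\sigma^k f=f$, i.e.\ if and only if $f(z+2\pi i k)=f(z)$. The stabilizer of $L$ in the cyclic group of order $N$ is the subgroup generated by $\sigma^m$. Hence the set of integer periods $k$ of $f$ in $\{1,\dots,N\}$ consists of the multiples of $m$, and the least positive one is $m$.

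We also need that every integer period $k$ is a multiple of $m$, including $k>N$. This follows by replacing $N$ with $\operatorname{lcm}(N,k)$, since $k$ is then a period and the argument above applies. This gives \eqref{eq:primitive-period}.

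Finally, $f\in\C(e^{z/m})$ means $f=S(e^{z/m})$ with $S\in\C(u)$.

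The main obstacle is the rationality step in Step 1: showing that a single-valued algebraic function of $u$ on $\C^*$ has no essential singularity at $0$ or $\infty$. I would handle it with the standard root bound $|w|\le 1+\max_j|a_j/a_n|$ for the roots $w$ of $a_nw^n+\dots+a_0$. Applied to $P(u^N,\cdot)$, this bounds $|F(u)|$ by a rational function of $|u|$ near $0$ and $\infty$, away from the finitely many zeros of the leading coefficient. Riemann's removable singularity theorem applied to $u^kF$ then shows that $F$ has at most a pole at each of $0$ and $\infty$.
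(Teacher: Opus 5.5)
Your proof is correct and follows the paper's route: pigeonhole on the translates $f(z+2\pi i k)$ gives an integer period, the root bound makes the descended function on $\C^*$ rational, and the Galois correspondence for the cyclic extension $\C(e^{z/N})/\C(e^z)$ finishes the argument. The only difference is that the paper takes $N$ to be the least integer period, so the stabilizer of $f$ is trivial and $\C(e^z,f)=\C(e^{z/N})$ is immediate, whereas you allow any period and read off the subgroup $\langle\sigma^m\rangle$. This works equally well, and your lcm step is unnecessary, since $N$ being a period already lets you reduce $k$ modulo $N$.
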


The period in this theorem is measured inside the prescribed deck group
$2\pi i\Z$. It need not be a primitive period among all complex periods
of $f$. For instance, $e^{2z}$ has $m=1$ in the theorem, although its
least positive imaginary period is $\pi i$.

For a nonconstant solution, the usual Nevanlinna characteristic satisfies
\begin{equation}
 T(r,f)=\frac{\deg S}{\pi m}\,r+O(1)
 \label{eq:growth-intro}
\end{equation}
in a primitive representation. Entire solutions are Laurent polynomials
in $e^{z/m}$. We establish these consequences directly in
Section~\ref{sec:growth}.
Section~\ref{sec:deficiencies} gives the counting functions and
deficiencies explicitly. Only $S(0)$ and $S(\infty)$ can be deficient
values, and the sum of the truncated deficiencies is exactly two.
These formulas express the value distribution of the solution in
terms of the two punctures and the ramification of the rational map $S$.

The proof of algebraicity uses the rational-surface consequence of
Guillot's classification~\cite[Main Theorem]{Guillot2019}: a
Zariski-dense single-valued
trajectory yields, after a birational change, a preserved fibration with
vertical poles. The challenge is to compare its base coordinate with
the prescribed rational function $t$, for which $Dt=t$.

Write $h$ for the new base coordinate. If $h$ and $t$ are independent,
$t$ is a finite map on the geometric generic fibre over $\C(h)$. At a
ramification point above a finite value $a$, preservation of the local
ring gives $Da=a$. A nonzero such value would restrict along the given
solution to a constant multiple of $e^z$, contradicting the independence
of $h$ and $t$. Riemann--Hurwitz then gives
\[
 t=c(h)u^d.
\]
The meromorphic single-valuedness of $u$ along the trajectory controls
the divisor of $c$. Relative algebraic closedness of $\C(t)$ in
$\C(t,w)$ completes the comparison and forces Riccati type in the
\emph{original} variable $w$.

We include the algebraic details of this comparison. In particular,
regularity after extension of the generic coefficient field, descent of
the two distinguished points, and the integer-exponent change of
variables will all be proved explicitly. The paper concludes with
examples and a comparison with the additive coordinate $Dt=1$.

\section{Differential function fields and rational curves}
\label{sec:fields}

All fields in this paper have characteristic zero. A derivation is
$\C$-linear and satisfies the Leibniz rule. If $K\subset L$ are
differential fields, the restriction of the derivation to $K$ is
understood. In particular, the derivation on a curve over $K$ need not
be $K$-linear.

\subsection{Relative algebraic closedness}

We say that $K$ is \emph{relatively algebraically closed} in $L$ if every
element of $L$ algebraic over $K$ already belongs to $K$. We will use
this property for two different subfields, and it is important to keep
their roles separate.

\begin{lemma}
\label{lem:relative}
If $x$ is transcendental over a field $F$, then $F$ is relatively
algebraically closed in $F(x)$. Consequently, if $v\in\C(t,w)$ and
\[
 v^d=\gamma t,\qquad \gamma\in\C^*,\quad d\in\N,
\]
then $d=1$.
\end{lemma}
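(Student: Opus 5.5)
The first assertion is the classical fact that a field is relatively algebraically closed in a purely transcendental extension of transcendence degree one. My plan is to prove it directly by a degree computation. Let $y\in F(x)$ be algebraic over $F$ and suppose $y\notin F$. Write $y=P(x)/Q(x)$ with $P,Q\in F[x]$ coprime and $\max(\deg P,\deg Q)=n\ge1$. Then $x$ is a root of $P(X)-yQ(X)\in F(y)[X]$. This polynomial is nonzero of degree $n$ in $X$, and it is irreducible over $F(y)$. The reason is that it is linear in $y$ and primitive in $F[X][y]$, since $P$ and $Q$ are coprime, so Gauss's lemma applies, provided $y$ is transcendental over $F$. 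Hence $[F(x):F(y)]=n$ is finite. If $y$ were algebraic over $F$, then $F(y)/F$ would be finite, and so $[F(x):F]$ would be finite. This contradicts the transcendence of $x$. Therefore $y\in F$.

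A cleaner route avoids the irreducibility claim. If $y$ is algebraic over $F$, then $F(x)$ is algebraic over $F(y)$ because $P(x)-yQ(x)=0$. This relation is nontrivial: if the polynomial $P-yQ$ vanished identically, then $y$ would be a ratio of coefficients of $P$ and $Q$, hence in $F$. Since $F(y)$ is algebraic over $F$, the field $F(x)$ would then be algebraic over $F$, which is absurd. I will use this version.

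For the consequence, apply the first part with $F=\C(t)$ and $x=w$, where $t,w$ are independent indeterminates, so that $\C(t,w)=F(w)$. If $v^d=\gamma t$, then $v$ is algebraic over $\C(t)$, so $v\in\C(t)$. Now compare orders at $t=0$. The valuation $\ord_0$ on $\C(t)$ gives $d\,\ord_0(v)=\ord_0(\gamma t)=1$. So $d$ divides $1$, and $d=1$.

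The only point needing care is the nontriviality of $P-yQ$. This follows immediately from the coprimality of $P$ and $Q$ together with $n\ge1$, so I do not expect a real obstacle.
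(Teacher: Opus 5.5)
Your proposal is correct and follows the paper's argument: the relation $P(x)-yQ(x)=0$ makes $x$ algebraic over $F(y)$, so algebraicity of $y$ over $F$ would force $x$ to be algebraic over $F$, and the consequence is obtained exactly as in the paper by taking $F=\C(t)$, $x=w$ and comparing orders at $t=0$. Your check that $P(X)-yQ(X)$ is a nonzero polynomial fills in a point the paper leaves implicit; otherwise $y$ would be a ratio of coefficients and hence in $F$. You were also right to drop the first Gauss-lemma version, since its irreducibility claim presupposes that $y$ is transcendental.
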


\begin{proof}
If $v\in F(x)\setminus F$, write $v=P(x)/Q(x)$ with $P,Q\in F[X]$.
The relation $P(x)-vQ(x)=0$ shows that $x$ is algebraic over $F(v)$.
If $v$ were algebraic over $F$, then $x$ would be algebraic over $F$,
which is impossible. Thus no such $v$ is algebraic over $F$.

For the second assertion, take $F=\C(t)$ and $x=w$. The proposed $v$
is algebraic over $F$, and therefore lies in $\C(t)$. Taking orders at
$t=0$ gives
\[
 d\,\ord_0 v=1.
\]
Since the order of a rational function is an integer, $d=1$.
\end{proof}

When $S\to B$ is a fibration of smooth projective varieties with
connected fibres and $B$ is a smooth curve, Stein factorization gives
relative algebraic closedness of $\C(B)$ in $\C(S)$. Conversely, the
finite part of Stein factorization records the relative algebraic
closure of the base function field. We use this standard
function-field interpretation of connectedness; see
\cite[Chapter~III, Section~11]{Hartshorne1977}.

\subsection{Extension of the derivation and local regularity}

If $\alpha$ is algebraic over a differential field $K$, its derivative
is uniquely determined. Indeed, for a separable minimal polynomial
$p(Y)=\sum_j a_jY^j$, differentiating $p(\alpha)=0$ gives
\begin{equation}
 D\alpha=-\frac{\sum_j(Da_j)\alpha^j}{p'(\alpha)}.
 \label{eq:algebraic-derivative}
\end{equation}
The denominator is nonzero. This construction extends $D$ uniquely to
every algebraic extension of $K$ and hence to an algebraic closure
$\overline K$; see, for example, Kolchin~\cite{Kolchin1973} for the
differential-algebraic background.

Let $\Gamma/K$ be a smooth projective geometrically integral curve,
with function field $L$. A derivation $D:L\to L$ extending one on $K$
will be called \emph{regular on the geometric curve} if its extension
to $\overline K(\Gamma)$ satisfies
\begin{equation}
 D\OO_{\Gamma_{\overline K},p}
 \subseteq\OO_{\Gamma_{\overline K},p}
 \qquad\text{for every }p\in\Gamma_{\overline K}.
 \label{eq:geometric-regularity}
\end{equation}
This is the local ring condition appearing in the differential-field
formulation of the Fuchs conditions; compare
\cite[Section~1, Definition~1]{Eremenko1982}. It does not assert that
$D$ preserves the maximal ideal of the local ring. The distinction will
matter in the ramification calculation.

\begin{lemma}
\label{lem:base-change}
Let $\pi:S\to B$ be a morphism from a smooth projective complex surface
to a smooth projective curve, with connected fibres. Suppose that a
meromorphic vector field $X$ preserves the fibration and that its polar
divisor is contained in finitely many fibres. Put
$K=\C(B)$ and $L=\C(S)$. Then the corresponding derivation on $L$
preserves $K$ and is regular on the geometric generic fibre.
\end{lemma}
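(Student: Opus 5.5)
Two things have to be shown: that $D$ maps $K$ into $K$, and that the extension of $D$ to $\overline K(\Gamma)$, where $\Gamma$ is the generic fibre, preserves every local ring. I will use only two hypotheses: $X$ preserves the fibration, and its poles lie in finitely many fibres.

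\emph{Preservation of $K$.} I read ``$X$ preserves the fibration'' as the statement that $X$ descends to a meromorphic vector field on $B$. In function-field terms, for $h\in K$ the derivative $X(\pi^*h)$ is a pullback $\pi^*(Yh)$ for a rational vector field $Y$ on $B$, so $DK\subseteq K$. If the hypothesis is instead phrased as ``$X$ is tangent to a foliation transverse to the fibres'' or ``the flow sends fibres to fibres'', I will translate it first. The argument is local: near a general point of $S$ choose coordinates $(b,y)$ with $\pi=b$. Preservation means that the $\partial_b$-component of $X$ depends only on $b$. Hence $D(\pi^*b)\in\pi^*\C(B)$, and by the chain rule this holds for all of $K$.

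\emph{Regularity on the generic fibre.} First choose a nonempty open set $U\subset B$ avoiding the finitely many fibres containing poles of $X$. Shrinking $U$, I may also assume that $U$ is affine and that $\pi$ is smooth over $U$. Then $X$ is a regular vector field on $\pi^{-1}(U)$, so $D$ preserves $\OO_S(V)$ for every open $V\subseteq\pi^{-1}(U)$. Every local ring of the generic fibre $\Gamma=S_K$ at a closed point is a localization of the local ring of $S$ at the generic point of a horizontal curve, and such a curve meets $\pi^{-1}(U)$. Since derivations preserve localizations ($D(a/s)=(sDa-aDs)/s^2$), $D$ preserves all local rings of $\Gamma$ over $K$.

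\emph{Base change to $\overline K$.} This is the step I expect to be the main obstacle. Let $K'\supset K$ be a finite extension, corresponding to a finite cover $B'\to B$. I will take $S'$ to be the normalization of $S\times_BB'$ restricted over an open set $U'\subset B'$ on which everything is étale and smooth. By \eqref{eq:algebraic-derivative}, $D$ extends uniquely to $K'$. Over $U'$, the map $S'\to S$ is étale, so $X$ lifts uniquely to a regular vector field on $S'|_{U'}$. This lift is again regular because étale maps identify tangent sheaves, and it induces the unique extension of $D$ to $K'(\Gamma)=K'\otimes_K L$. Repeating the localization argument on $S'$ shows that local rings of $\Gamma_{K'}$ are preserved.

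Every closed point $p$ of $\Gamma_{\overline K}$ is defined over some finite extension $K'$. By geometric integrality, $\OO_{\Gamma_{\overline K},p}$ is the localization of $\overline K\otimes_{K'}\OO_{\Gamma_{K'},p'}$ at the maximal ideal of $p$, where $p'$ is the image of $p$. On $\overline K$, $D$ is the unique extension, which preserves $\overline K$. On the second factor, $D$ preserves it by the previous step. By Leibniz and preservation of localizations, $D$ preserves $\OO_{\Gamma_{\overline K},p}$, which is \eqref{eq:geometric-regularity}. The care needed is to check that the étale lift of $X$ agrees with the algebraic extension of $D$. Both are derivations of $K'(\Gamma)$ extending $D$ on $L$, and $K'(\Gamma)$ is algebraic over $L$, so they coincide by uniqueness.
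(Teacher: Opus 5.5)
Your proof is correct. Its first two steps match the paper's: you remove the polar and critical fibres so that $X$ is regular on $\pi^{-1}(U)$, and then localize to the generic fibre. You are also more explicit than the paper about why preservation of the fibration gives $D(K)\subseteq K$; the paper treats this as immediate.

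The difference lies in the passage to $\overline K$. You go through finite extensions $K'$ and realize them as covers $B'\to B$. You then lift $X$ along the \'etale map $S'\to S$ over $U'$ and identify the lift with the algebraic extension of $D$ by uniqueness. Only after that do you tensor with $\overline K$ over $K'$.

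The paper skips the geometric step entirely. It extends $D$ to $A\otimes_K\overline K$, for each affine coordinate ring $A$ of the generic fibre, by the Leibniz formula \eqref{eq:tensor-derivation}, and notes that this formula is balanced over $K$. Geometric integrality identifies these rings with the affine rings of $\Gamma_{\overline K}$, and one then localizes at an arbitrary point.

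Your final paragraph is exactly this argument with $K'$ in place of $K$, and it works verbatim over $K$. A closed point of $\Gamma_{\overline K}$ need not lie over a $K$-rational point of $\Gamma$. Its local ring is nevertheless the localization of $\overline K\otimes_K A$ at some maximal ideal, and derivations preserve every localization. So the \'etale-lift step, which you flagged as the main obstacle, is unnecessary. What it buys is a geometric picture: the derivation on $\Gamma_{K'}$ comes from an honest regular vector field on a surface over $B'$. The cost is normalizations and \'etale loci that the purely algebraic route avoids.

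A minor point: the local ring of $\Gamma$ at a closed point equals $\OO_{S,\eta_C}$ for the corresponding horizontal curve $C$; it is not merely a localization of it. What your argument actually uses is that this ring is the localization of $\OO_S(V)$ at the prime of $C\cap V$, for an affine open $V\subseteq\pi^{-1}(U)$ meeting $C$.
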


\begin{proof}
Preservation of the fibration implies $D(K)\subseteq K$. Remove from
$B$ the images of the polar components and the finitely many critical
values of $\pi$. On the remaining open set, the fibration is smooth and
$X$ is holomorphic on the whole inverse image. Every affine coordinate
ring on this inverse image is therefore preserved by $D$.

Passing to the generic fibre consists of localizing by nonzero functions
of the base. Derivations preserve these localized rings, because
\[
 D(a/s)=\frac{sDa-aDs}{s^2}.
\]
For an affine coordinate ring $A$ of the generic fibre, extend the
derivation to $A\otimes_K\overline K$ by
\begin{equation}
 D(a\otimes\alpha)=Da\otimes\alpha+a\otimes D\alpha.
 \label{eq:tensor-derivation}
\end{equation}
This formula is balanced over $K$ and thus well-defined. The curve is
smooth and geometrically integral, so these rings are the affine rings
of its geometric base change. Localizing once more at any point proves
\eqref{eq:geometric-regularity}.
\end{proof}

\begin{remark}
\label{rem:semilinear}
Poles of the coefficients $D\alpha$ as algebraic functions on a finite
cover of the base do not contradict the lemma. In a local ring of the
\emph{generic fibre}, every nonzero coefficient in $\overline K$ is a
unit. The assertion concerns poles in the fibre variable, not the
locations of poles on the base curve.
\end{remark}

\subsection{The Riccati criterion}

\begin{lemma}[Regular derivations on a rational curve]
\label{lem:riccati}
Let $K$ be a differential field and let $x$ be transcendental over $K$.
For an extension of the derivation to $K(x)$, the following conditions
are equivalent:
\begin{enumerate}
\item $D$ is regular on $\PP^1_{\overline K}$;
\item $Dx=a_2x^2+a_1x+a_0$, where $a_0,a_1,a_2\in K$.
\end{enumerate}
\end{lemma}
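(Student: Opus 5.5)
We prove the two implications separately. The local rings of $\PP^1_{\overline K}$ are the localizations of $\overline K[x]$ at the points $x=c$, $c\in\overline K$, and the local ring at infinity, with uniformizer $y=1/x$. Throughout we use the unique extension of $D$ to $\overline K(x)$ from \eqref{eq:algebraic-derivative}, together with the fact that $D$ maps $\overline K$ into itself.

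\textbf{(ii)$\Rightarrow$(i).} Assume $Dx=a_2x^2+a_1x+a_0$ with $a_j\in K$. At a finite point $c\in\overline K$, the local ring is generated over $\overline K$ by $x$ and by inverses of polynomials in $x$ not vanishing at $c$. Since $D\overline K\subseteq\overline K$ and $Dx\in\overline K[x]$, the Leibniz rule gives $D\overline K[x]\subseteq\overline K[x]$. The quotient formula $D(a/s)=(sDa-aDs)/s^2$ then shows that the localization is preserved. At infinity we compute
\[
Dy=-\frac{Dx}{x^2}=-(a_2+a_1y+a_0y^2),
\]
which lies in $\overline K[y]$. The same argument therefore applies to the localization of $\overline K[y]$ at $y=0$.

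\textbf{(i)$\Rightarrow$(ii).} Write $P=Dx\in K(x)$.

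First, $x$ lies in the local ring at every finite point $c$, so regularity forces $P$ to have no pole at any $c\in\overline K$. Since $\overline K$ is algebraically closed, the denominator of $P$ in lowest terms has no roots and is therefore a nonzero constant. Hence $P\in\overline K[x]\cap K(x)=K[x]$. The last equality holds because the coefficients are fixed by the field of definition; equivalently, the reduced form of a rational function over $K$ stays reduced over $\overline K$.

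Second, $y=1/x$ lies in the local ring at infinity, so $Dy=-P/x^2=-y^2P(1/y)$ must be regular at $y=0$. If $\deg P=n$, this expression has order $2-n$ at $y=0$, which forces $n\le2$. This gives (ii) with $a_j\in K$.

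The only point needing care is the descent of the coefficients from $\overline K$ to $K$. It follows directly from $P\in K(x)$ together with the uniqueness of the reduced representation, so we do not expect a genuine obstacle. One should also note, following Remark~\ref{rem:semilinear}, that the derivation is not $\overline K$-linear. This does not matter, because the argument only uses $D\overline K\subseteq\overline K$, and nonzero elements of $\overline K$ are units in every local ring.
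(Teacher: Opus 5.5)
Your proposal is correct and follows the paper's proof. You test regularity on $x$ at each finite point (the paper uses $x-\alpha$, which differs only by $D\alpha\in\overline K$) and on $1/x$ at infinity, and for the converse you use the Leibniz rule and the quotient formula on both affine charts. Your explicit check that $\overline K[x]\cap K(x)=K[x]$ fills in a small descent step that the paper leaves implicit.
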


\begin{proof}
Write $Dx=Q(x)$ with $Q\in K(x)$. If $Q$ has a finite pole
$\alpha\in\overline K$, the element $x-\alpha$ is regular at that point
but
\[
 D(x-\alpha)=Q(x)-D\alpha
\]
has a pole. Thus (i) implies that $Q$ is a polynomial. At infinity use
$s=1/x$. Regularity of
\[
 Ds=-s^2Q(1/s)
\]
then implies $\deg Q\leq2$.

Conversely, a polynomial $Q$ of degree at most two makes both $Dx$ on
the affine chart and $D(1/x)$ on the chart at infinity regular. The
derivation of coefficients remains in $\overline K$, and localization
preserves the derivation. Hence every local ring is preserved.
\end{proof}

The criterion is compatible with changes of generator. In particular,
it includes the usual invariance of Riccati equations under
coefficient-dependent fractional linear transformations.

\begin{lemma}
\label{lem:rational-image}
Suppose that $D$ is regular on $\PP^1_{\overline K}$ with coordinate
$x$. Let $F\in K(x)\setminus K$. If $DF=Q(F)$ for some $Q\in K(Y)$,
then $Q$ is a polynomial of degree at most two.
\end{lemma}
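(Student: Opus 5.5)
The plan is to reduce to Lemma~\ref{lem:riccati}, applied not to $x$ but to a new generator built from $F$, after first showing that $K(F)$ is a differential subfield of $K(x)$ whose derivation is again regular on its own geometric projective line. Put $y=F$. Since $F\notin K$, $y$ is transcendental over $K$. The hypothesis $Dy=Q(y)$ shows that $K(y)$ is closed under $D$, and $K(x)/K(y)$ is a finite extension of degree $n=\deg F$. Geometrically, over $\overline K$ the element $F$ defines a finite surjective morphism
\[
 \phi:\PP^1_{\overline K,x}\to\PP^1_{\overline K,y}
\]
and the derivation on $\overline K(x)$ restricts to the one on $\overline K(y)$. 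The goal is to show that $D$ is regular on $\PP^1_{\overline K,y}$; Lemma~\ref{lem:riccati}, with $y$ in place of $x$, then gives $Q(y)=a_2y^2+a_1y+a_0$ with $a_j\in K$, which is the claim.

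\textbf{Regularity descends along $\phi$.} Fix a point $b\in\PP^1_{\overline K,y}$ and a point $p\in\phi^{-1}(b)$. The map $\phi$ is finite and surjective between smooth curves, so
\[
 \OO_{\PP^1_y,b}=\overline K(y)\cap\OO_{\PP^1_x,p}.
\]
Indeed, an element of $\overline K(y)$ that is regular at $p$ has nonnegative valuation at $b$, because $v_p=e_p\,v_b$ on $\overline K(y)$ with $e_p\geq1$.

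Now take $g\in\OO_{\PP^1_y,b}$. Then $g\in\OO_{\PP^1_x,p}$, so regularity on the $x$-line gives $Dg\in\OO_{\PP^1_x,p}$. Also $Dg\in\overline K(y)$, since the extended derivation preserves $\overline K(y)$: it is generated by $K(y)$, which is $D$-stable, and by $\overline K$, which is $D$-stable by the construction \eqref{eq:algebraic-derivative}. Hence $Dg$ lies in the intersection, that is, $Dg\in\OO_{\PP^1_y,b}$. This is \eqref{eq:geometric-regularity} on the $y$-line, and it already includes the point $b=\infty$.

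\textbf{Compatibility of the extensions.} The one point that needs care is that the extension of $D$ to $\overline K(y)$ agrees with the restriction of the extension to $\overline K(x)$. This follows from the uniqueness of extensions of derivations to algebraic extensions: both derivations agree on $K(y)$, and $\overline K(y)$ is algebraic over $K(y)$.

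The main obstacle I expect is this bookkeeping about $\overline K$ versus $K$: making sure that the coefficients $a_j$ produced by Lemma~\ref{lem:riccati} lie in $K$ and not merely in $\overline K$. This is automatic here, because $Q\in K(Y)$ is given, and Lemma~\ref{lem:riccati} shows only that $Q$ is a polynomial of degree at most two; its coefficients are then those of $Q$. As an alternative route at the poles, I would also check directly that a finite pole $\beta\in\overline K$ of $Q$ would make $D(y-\beta)$ singular at any $x$-point above $y=\beta$, which contradicts regularity upstairs. This is the same argument in coordinates.
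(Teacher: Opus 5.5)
Your argument is correct, but it is organized differently from the paper's.

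The paper stays on the $x$-line throughout. Using surjectivity of $F$, it takes a point above a finite pole $a\in\overline K$ of $Q$ and notes that the regular element $F-a$ has derivative $Q(F)-Da$, which has a pole there. At a pole of $F$ it checks that $D(1/F)=-Q(F)/F^2$ is regular only if $\deg Q\le 2$. In effect, the paper reruns the proof of Lemma~\ref{lem:riccati} on pulled-back elements; your closing ``alternative route'' is exactly this.

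You instead isolate a descent principle. Every local ring of the $y$-line is the contraction $\overline K(y)\cap\OO_{\PP^1_x,p}$ of a local ring upstairs, via $v_p=e_p v_b$. Hence regularity passes to the $D$-stable subfield $\overline K(F)$, and you then apply Lemma~\ref{lem:riccati} as a black box. The mechanism is the same: surjectivity of $F$ plus comparison of valuations. What your version gains is the stronger, reusable statement that the derivation restricted to $\overline K(F)$ is itself regular, rather than only the shape of $Q$.

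The price is some bookkeeping, which you supply correctly:
\begin{enumerate}
\item Transcendence of $F$ over $K$, so that Lemma~\ref{lem:riccati} applies to $K(F)$. This silently rests on the relative algebraic closedness in Lemma~\ref{lem:relative}.
\item The check that the extension of $D$ to $\overline K(x)$ restricts on $\overline K(F)$ to the extension used to define regularity there.
\end{enumerate}
The paper's direct computation only ever uses the single extended derivation on $\overline K(x)$, so it needs neither step.
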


\begin{proof}
The nonconstant map $F:\PP^1_{\overline K}\to\PP^1_{\overline K}$
is surjective. If $Q$ has a finite pole at $a$, choose a point above
$a$. The derivative of the regular element $F-a$ would have a pole,
which is impossible. Thus $Q$ is polynomial. At a pole of $F$, the
element $1/F$ is regular, and
\[
 D(1/F)=-Q(F)/F^2.
\]
This is regular only when $\deg Q\leq2$.
\end{proof}

\begin{corollary}
\label{cor:change-generator}
Suppose that $K(x)=K(y)$ and $Dy$ is a polynomial of degree at most two
in $y$. Then $Dx$ is a polynomial of degree at most two in $x$.
\end{corollary}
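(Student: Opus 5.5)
The plan is to pass through the regularity criterion of Lemma~\ref{lem:riccati} twice, once in each direction, and to let the equality $K(x)=K(y)$ carry the information from the generator $y$ to the generator $x$.

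First, since $K(x)=K(y)$ and $x$ is transcendental over $K$, the element $y$ is also transcendental over $K$: if $y$ were algebraic over $K$, then $K(y)$ would be algebraic over $K$ and could not equal $K(x)$. The derivation on $K(x)=K(y)$ is a single derivation, and by hypothesis $Dy=b_2y^2+b_1y+b_0$ with $b_j\in K$. Applying the implication (ii)$\Rightarrow$(i) of Lemma~\ref{lem:riccati} with generator $y$ shows that $D$ is regular on $\PP^1_{\overline K}$.

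Second, I will check that the geometric curve $\PP^1_{\overline K}$ attached to $K(y)$ is the same as the one attached to $K(x)$. Condition \eqref{eq:geometric-regularity} concerns the local rings of the smooth projective model of the function field $\overline K(x)=\overline K(y)$. These local rings are intrinsic to the function field, so they do not depend on which generator is used as the coordinate. In concrete terms, $y=(\alpha x+\beta)/(\gamma x+\delta)$ with coefficients in $K$ and nonzero determinant, and this Möbius transformation is an automorphism of $\PP^1_{\overline K}$. Regularity of $D$ therefore holds with respect to the coordinate $x$ as well. The implication (i)$\Rightarrow$(ii) of Lemma~\ref{lem:riccati} then gives $Dx=a_2x^2+a_1x+a_0$ with $a_j\in K$.

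As an alternative, one could apply Lemma~\ref{lem:rational-image} directly with $F=x\in K(y)\setminus K$. This requires knowing already that $Dx=Q(x)$ for some $Q\in K(Y)$, which holds because $Dx\in K(x)$. The only point that needs care is the intrinsic nature of the local rings, that is, independence from the choice of coordinate. I expect this step to be brief rather than a genuine obstacle.
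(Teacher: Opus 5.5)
Your proposal is correct and is essentially the paper's argument. The paper leaves the proof implicit, deducing the corollary from (ii)$\Rightarrow$(i) of Lemma~\ref{lem:riccati} in the generator $y$ together with Lemma~\ref{lem:rational-image} applied to $F=x$, which is exactly your stated alternative; your main route instead reapplies (i)$\Rightarrow$(ii) in the generator $x$, having observed that the local rings of the smooth projective model depend only on the function field, and this is an equally valid variant.
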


For clarity, the algebraic change of variables in this corollary is
always fractional linear. A nonconstant rational function $F(y)$ has
extension degree $[K(y):K(F)]=\deg F$; consequently $K(F)=K(y)$ forces
$\deg F=1$. The corresponding matrix is allowed to have entries in
$K$, and differentiation of those entries does not alter the conclusion
of the corollary.

\section{Single-valued trajectories and a preserved fibration}
\label{sec:guillot}

\subsection{The trajectory associated with a transcendental solution}

Assume for the moment that a solution $f$ of
\eqref{eq:main-equation} is transcendental over $\C(e^z)$. Set
\begin{equation}
 L=\C(t,w),\qquad
 D=t\partial_t+R(t,w)\partial_w.
 \label{eq:ambient-derivation}
\end{equation}
The substitution
\begin{equation}
 \iota:L\hookrightarrow\MM(\C),\qquad
 \iota(t)=e^z,\quad \iota(w)=f
 \label{eq:evaluation}
\end{equation}
is an injective field homomorphism. It is differential by the equation
for $f$. The nonvanishing of denominators as meromorphic functions
follows from algebraic independence. In particular, every rational
function on any birational model gives a well-defined meromorphic
function after substitution.

The map
\[
 \Phi:\C\longrightarrow S_0=\PP^1\times\PP^1,
 \qquad \Phi(z)=(e^z,f(z)),
\]
is holomorphic as a map to the compact surface. It is a solution of
$X=D$ wherever $X$ is holomorphic, including at poles of $f$ when a
reciprocal coordinate is used. Its image is Zariski dense, because a
polynomial relation on the image would contradict injectivity of
\eqref{eq:evaluation}. By Chow's theorem
\cite[Appendix~B]{Hartshorne1977}, the image is also contained in no
proper closed analytic subset of the projective surface.

\begin{lemma}
\label{lem:maximal}
Let $P$ be the polar divisor of $X$ on $S_0$. Then
$\Omega=\C\setminus\Phi^{-1}(P)$ is the domain of a single-valued
maximal solution of $X$ on $S_0\setminus P$.
\end{lemma}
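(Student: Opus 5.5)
Three things have to be checked: $\Omega$ is open, $\Phi|_\Omega$ is an integral curve of $X$ on $S_0\setminus P$, and this solution cannot be continued beyond $\Omega$ within $S_0\setminus P$.

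\emph{Openness and discreteness.} First we show that $\Phi^{-1}(P)$ is a discrete closed subset of $\C$. The polar divisor $P$ is a proper algebraic curve in $S_0$. Since $\Phi$ is holomorphic into the compact surface, $\Phi^{-1}(P)$ is a closed analytic subset of $\C$. In a local affine chart, $P$ is the zero set of a local equation $g$, so $\Phi^{-1}(P)$ is locally the zero set of $g\circ\Phi$. Suppose $\Phi^{-1}(P)$ were not discrete. Then $g\circ\Phi$ would vanish identically near some point, and by the identity theorem the whole connected image $\Phi(\C)$ would lie in the closed analytic set $P$. That contradicts Zariski density, equivalently Chow's theorem as already noted. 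Hence $\Omega$ is $\C$ minus a discrete closed set: it is open and connected, and $\Phi(\Omega)\subset S_0\setminus P$.

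\emph{Solution property.} In the product charts $(t,w)$, $(1/t,w)$, $(t,1/w)$, $(1/t,1/w)$, the vector field $X$ is holomorphic off $P$. Along $\Phi$, the differential equation holds for the chart coordinates on $\Omega$. For $w$ it is the equation for $f$; for $1/w$ near poles of $f$ it is $(1/f)'=-f'/f^2=-R(e^z,f)/f^2$, which is the transformed component of $X$. The $t$-coordinate satisfies $(e^z)'=e^z$; note $e^z$ never takes the values $0$ or $\infty$. So $\Phi|_\Omega$ is a holomorphic solution of $X$ on $S_0\setminus P$, and it is single-valued because it is the restriction of a map defined on $\C$.

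\emph{Maximality.} Let $\psi:U\to S_0\setminus P$ be a solution of $X$ on a connected domain $U\supseteq\Omega$ with $\psi=\Phi$ on $\Omega$. Since $\Omega$ is dense in $\C$, $U$ can only add points $z_0\in\Phi^{-1}(P)$. Both $\psi$ and $\Phi$ are continuous at $z_0$ (if $U\ni z_0$) and agree on a punctured neighbourhood, so $\psi(z_0)=\Phi(z_0)\in P$. This contradicts $\psi(U)\subset S_0\setminus P$. Hence $\Omega$ is the maximal domain.

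The main subtlety is to interpret ``maximal'' in the sense Guillot uses, namely for the holomorphic foliation with time parametrization where $X$ is holomorphic. In that sense the argument above, based on continuity of $\Phi$ at the isolated points of $\Phi^{-1}(P)$, is the whole content. The identification of $\Omega$ as a \emph{domain}, that is, connected, follows from discreteness.
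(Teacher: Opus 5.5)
Your discreteness and connectedness argument matches the paper's proof, and so does your chart-by-chart check of the solution property (the paper does this in the text just before the lemma). The gap is in the maximality step. What you actually prove is that $\Phi|_\Omega$ has no extension to a strictly larger \emph{planar} domain $U\supsetneq\Omega$.

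The notion the lemma needs is the one fed into Guillot's theorem, namely the proper-graph condition of Guillot's Definition~5. It requires $z\mapsto(z,\Phi(z))$ to be a proper map $\Omega\to\C\times(S_0\setminus P)$. Equivalently, $\Phi(z)$ must leave every compact subset of $S_0\setminus P$ as $z$ tends to a finite boundary point of $\Omega$. This condition is what makes the maximal solution univalent, and non-extendability within $\C$ does not imply it.

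Here is a counterexample to the implication. The function $\phi(t)=\sqrt t$ on $\C\setminus(-\infty,0]$ solves $\frac{1}{2w}\partial_w$ on $\C^*$. It cannot be continued to any larger planar domain, because the boundary values on the two sides of the slit disagree. Yet $\phi(-1\pm i/n)\to\pm i\in\C^*$, so its graph is not proper, and its maximal solution is two-valued. Your closing claim, that the extension argument is ``the whole content'' in Guillot's sense, is therefore not justified as stated.

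The repair uses your own continuity observation, phrased as properness rather than non-extendability.
\begin{itemize}
\item Let $z_n\in\Omega$ converge to $z_0\in\Phi^{-1}(P)$. Then $\Phi(z_n)\to\Phi(z_0)\in P$.
\item A compact $K\subset S_0\setminus P$ is closed in $S_0$ and misses $\Phi(z_0)$, so $\Phi(z_n)\notin K$ for large $n$.
\item Sequences with $|z_n|\to\infty$ already leave every compact set of the $\C$ factor.
\end{itemize}
Hence the preimage of a compact set under the graph map is a closed bounded subset of $\C$ contained in $\Omega$, and so it is compact. This is precisely the paper's argument.

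The paper also records that $\Phi(\Omega)$ is still Zariski dense, which follows from the identity theorem. That is the other hypothesis Theorem~\ref{thm:guillot} consumes, and your proposal does not mention it.
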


\begin{proof}
The preimage $\Phi^{-1}(P)$ is a closed discrete set, since the dense
image of $\Phi$ is not contained in $P$. Its complement $\Omega$ is
connected. Moreover, the restricted image $\Phi(\Omega)$ is still
Zariski dense: an algebraic relation vanishing there would, after
composition with $\Phi$, vanish on a nonempty open subset of $\C$ and
hence everywhere by the identity theorem.

If a sequence in $\Omega$ converges in $\C$ but not in $\Omega$, its
limit is a point $z_0\in\Phi^{-1}(P)$. Continuity of the original map
gives $\Phi(z)\to\Phi(z_0)\in P$, so the restricted trajectory leaves
every compact subset of $S_0\setminus P$. This is exactly the
proper-graph condition in \cite[Definition~5]{Guillot2019}; hence the
solution is univalent maximal in Guillot's terminology.
\end{proof}

Here single-valuedness concerns the time parameter; it does not require
$\Phi$ to be injective. Deleting an isolated time at which the trajectory
meets $P$ is legitimate, because the maximality is asserted in
$S_0\setminus P$, rather than in the original compact surface.

\subsection{The rational-surface consequence of the classification}

We use the following consequence of Guillot's main theorem and the
observation immediately following it.

\begin{theorem}
\label{thm:guillot}
A meromorphic vector field on a projective rational surface with a
Zariski-dense single-valued maximal trajectory has a smooth birational
model admitting a connected fibration over $\PP^1$ such that its polar
divisor is vertical and its flow maps fibres to fibres.
\end{theorem}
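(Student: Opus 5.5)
The statement is a consequence of Guillot's Main Theorem \cite[Main Theorem]{Guillot2019} together with the remark immediately after it, so the plan is to translate that classification into the fibration statement rather than to reprove it. The first step is to recall Guillot's setting. One has a meromorphic vector field $X$ on a compact complex surface $S$ and a univalent maximal solution, in the sense of \cite[Definition~5]{Guillot2019}, whose image is not contained in a proper analytic subset. Lemma~\ref{lem:maximal} shows that our trajectory satisfies these hypotheses, and Chow's theorem lets us pass freely between Zariski density and analytic non-degeneracy. Guillot's theorem then gives a birational model $S'$ of $S$ such that the induced field belongs to one of finitely many explicit families. One family is the holomorphic or complete fields on tori, on Inoue-type and Kato-type surfaces, and on other non-rational models. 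The other families are fields preserving an elliptic fibration or a rational fibration, and in each case the poles are contained in fibres.

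The second step is to use rationality to discard the non-fibred alternatives. Every entry on Guillot's list that does not preserve a fibration lives on a surface birational to a torus or to a non-algebraic (Inoue or Kato) surface. Kodaira dimension, or irregularity $q>0$, together with non-Kähler type, is a birational invariant, and none of these surfaces is birational to $\PP^1\times\PP^1$. Hence on a rational surface we must be in one of the fibred cases. There is a birational model $S'$ and a morphism $\pi:S'\to B$ onto a curve such that the flow of $X$ maps fibres to fibres and the polar divisor of $X$ is a union of fibres. If the model is singular, we resolve singularities; the blow-ups are isomorphisms away from the exceptional loci, which lie over points of $B$, so verticality of the poles is preserved.

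The third step is to arrange that the fibration is connected and that the base is $\PP^1$. We first take the Stein factorization $S'\to B'\to B$. The fibres of $S'\to B'$ are connected, and a vector field preserving the fibres of $\pi$ also permutes their connected components, so it preserves the new fibration. The polar divisor remains vertical, because preimages of points stay finite unions of fibres. Lüroth's theorem applied to $\C(B')\subset\C(S')$, a subfield of a purely transcendental field of transcendence degree two, shows that $B'$ is rational, hence $B'\cong\PP^1$. More directly, a curve dominated by a rational surface has genus zero, since $H^0(\Omega^1)$ pulls back injectively and $q(S')=0$.

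The main obstacle is the bookkeeping in the first step. Guillot's paper phrases its conclusions up to bimeromorphic maps and allows the fibration to be given on a model where the field is only semi-complete. We must check that the remark following the Main Theorem indeed supplies, for rational surfaces, a fibration with vertical poles and not merely an invariant foliation. Our approach is to quote that remark directly, and to note that an invariant rational or elliptic fibration whose leaves are transverse to $X$ forces the polar divisor to be $X$-invariant and hence a union of fibres.
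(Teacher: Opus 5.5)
Your proposal is correct and follows the paper's own argument: you quote Guillot's Main Theorem with the remark after it, discard the only non-fibred alternative (a holomorphic field on an Abelian surface, of irregularity two) by birational invariance, and pass to the Stein factorization, whose base is $\PP^1$ because a rational surface carries no nonzero holomorphic one-forms. Do not let your closing heuristic replace the citation, because preserving a fibration does not by itself make the polar divisor invariant: the field $t\partial_t+w^{-1}\partial_w$ preserves the $t$-fibration yet has the horizontal, non-invariant polar curve $w=0$, so verticality of the poles really comes from single-valuedness via Guillot's theorem, exactly as the paper cites it.
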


\begin{proof}
Guillot's Main Theorem~\cite[Main Theorem and the paragraph following
it]{Guillot2019} gives either a preserved fibration or, in the only
nonfibred case, a holomorphic vector field on an Abelian surface without
elliptic subgroups. The latter surface is not birational to a rational
surface: their irregularities are respectively two and zero, and
irregularity is a birational invariant; see
Beauville~\cite{Beauville1996}.

The base of the preserved fibration must be rational. Indeed, a dominant
map from a rational surface to a positive-genus curve would pull back a
nonzero holomorphic one-form, whereas a rational surface has none.
On the smooth model supplied by the theorem, Stein factorization gives
connected fibres over $\PP^1$. Guillot's conclusion places every polar
component in a fibre; passing to this connected-fibre factorization does
not rescale the vector field. This proves the stated form.
\end{proof}

Applying Theorem~\ref{thm:guillot} and
Lemma~\ref{lem:base-change} to \eqref{eq:ambient-derivation} gives a
subfield
\begin{equation}
 K=\C(h)\subset L
 \label{eq:base-field}
\end{equation}
with the following properties:
\begin{enumerate}
\item $K$ is relatively algebraically closed in $L$;
\item $Dh=b(h)$ for some $b\in\C(h)$;
\item $D$ is regular on the geometric generic fibre over $K$.
\end{enumerate}
The generic fibre is smooth, projective and geometrically integral.
No assumption on its genus is needed: the ramification argument will
determine it whenever $h$ and $t$ are algebraically independent.

Set $H=\iota(h)$. The function $H$ is nonconstant, because otherwise
injectivity would give $h\in\C$. Therefore
\begin{equation}
 H'=b(H),\qquad b\ne0.
 \label{eq:base-autonomous}
\end{equation}
More generally, there are no nonconstant rational first integrals of
$D$. If $DG=0$ with $G\in L$, then $\iota(G)$ is constant on $\C$;
injectivity again implies $G\in\C$.

\begin{remark}
\label{rem:time}
The identity $Dt=t$ persists under every birational change used here.
Multiplying $D$ by a nonconstant rational function would change this
identity, and would not preserve the time parametrization of the
trajectory. The argument below therefore uses a statement about the
vector field, not just its associated foliation.
\end{remark}

\subsection{The base trajectory}

The global meromorphic solution on the base can be classified without
any growth estimate.

\begin{lemma}
\label{lem:autonomous}
Let $H\in\MM(\C)$ be nonconstant and satisfy $H'=b(H)$ with
$b\in\C(s)$. A constant-coefficient fractional linear change of the
dependent variable transforms $H$ into either $z$ or $e^{\lambda z}$,
where $\lambda\in\C^*$.
\end{lemma}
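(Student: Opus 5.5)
The plan is to classify the rational function $b$ by showing that a global meromorphic solution of the autonomous equation $H'=b(H)$ forces $b$ to be a polynomial of degree at most two in $s$, and then to integrate explicitly. The second step is elementary once the first holds. The first step is the classical argument of Malmquist type for autonomous equations, and it is local at poles and at finite values.

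\emph{Finite poles of $b$.} Suppose $b$ has a pole of order $k\ge1$ at a finite point $a$. Since $H$ is nonconstant and meromorphic, the function $H-a$ has a zero at some point $z_0$, by Picard's theorem or by a direct argument. Such a zero must exist, since otherwise we use the omitted-value alternative handled below. Write $H-a\sim c(z-z_0)^n$ with $n\ge1$. Then $H'$ has order $n-1\ge0$, while $b(H)$ has order $-kn<0$, which is a contradiction. So $b$ is a polynomial on the set of values actually attained.

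More cleanly, I would argue as follows. If $H$ omits $a$, then $H'=b(H)$ shows that $1/(H-a)$ satisfies an equation of the same kind, and I use the fractional linear change $s\mapsto 1/(s-a)$ at the outset. Preferable, though, is to argue by the order of $b$ at $\infty$ in a form that is invariant: the vector field $b(s)\partial_s$ on $\PP^1$ must be holomorphic along the image. At a point $z_0$ where $H(z_0)=a$ is finite, the Laurent comparison above shows that $b$ is regular at $a$. At a pole of $H$ of order $n$, $H'$ has a pole of order $n+1$, and if $b$ has degree $d$ at infinity then $b(H)$ has a pole of order $dn$. Hence $n+1=dn$, forcing $d=2$ and $n=1$. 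Thus every attained value is a regular point of the vector field $b\,\partial_s$ on $\PP^1$.

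\emph{Omitted values.} By Picard, $H$ omits at most two values of $\PP^1$. I expect the main obstacle to be controlling $b$ at these omitted points. I would handle it by showing that $b\,\partial_s$ has no poles at all. Take a pole $a$ of $b\,\partial_s$; by the previous paragraph it is omitted. If the rational 1-form $\dd s/b(s)$ had a pole at $a$, that would correspond to a zero of the vector field, not a pole. At a pole $a$ of the vector field, the form $\dd s/b$ is holomorphic with a zero there. Since $z=\int \dd H/b(H)$ locally, the time function $z$ is a single-valued local primitive. Along a path in $\C$ on which $H\to a$, the time integral converges, so $a$ is reached in finite time, contradicting $H\in\MM(\C)$ being defined for all $z$. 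To make this rigorous without paths, I would use the fact that the image of the punctured neighbourhood under the local flow is reached in finite time from some attained point near $a$: attained values accumulate at every point by density, since a nonconstant meromorphic function has dense image. Hence $b\,\partial_s$ is a holomorphic vector field on $\PP^1$, that is, $b$ is a polynomial of degree at most two in every affine chart.

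\emph{Normal forms.} A constant-coefficient fractional linear change conjugates a nonzero holomorphic vector field on $\PP^1$ to one of $\partial_s$, $\lambda s\partial_s$, or (double zero) $s^2\partial_s\sim\partial_s$ after $s\mapsto -1/s$. Solving gives $H=z+c$ or $H=Ce^{\lambda z}$. A translation $z\mapsto z+c$ is absorbed into a further affine change of the dependent variable: $z+c\mapsto z$, and $Ce^{\lambda z}\mapsto e^{\lambda z}$ by scaling. This yields $z$ or $e^{\lambda z}$, as claimed.
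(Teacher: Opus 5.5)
Your proposal is correct, but it controls $b$ by a different mechanism than the paper. The paper first shows that a nonconstant solution attains neither a zero nor a pole of $b(s)\partial_s$. It notes that a nonzero rational field on $\PP^1$ has at least one such point, and then invokes the meromorphic Picard theorem. With one omitted value moved to $\infty$ the coefficient must be constant. With two omitted values moved to $0,\infty$ it must be $cs^j$, and $j\ne1$ is excluded because $G^{1-j}=(1-j)cz+c_0$ would vanish somewhere.

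You instead show directly that $b\partial_s$ has no poles on $\PP^1$. Attained poles are excluded by the order count. Omitted poles are excluded because $\dd s/b$ is holomorphic at a pole of the field, so a trajectory that comes near it (the image is dense) reaches it in finite time. The conclusion then follows from the M\"obius normal forms $\partial_s$ and $\lambda s\partial_s$ of nonzero holomorphic fields on $\PP^1$.

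Your route needs no Picard theorem, since density of the image follows from Liouville. It also never has to discuss zeros of the field. It is the usual Painlev\'e-type observation that a pole of order $k$ forces branching like $(z-z^*)^{1/(k+1)}$. The paper's route replaces your local continuation step by a one-line global integration, at the price of Picard.

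When you write yours out, make the finite-time step explicit. Work in a local coordinate centred at $a$ (including $a=\infty$), and take a disc $U$ on which $\phi(s)=\int_a^s\dd\sigma/b(\sigma)$ is a $(k+1)$st power of a coordinate. Pick $z_1$ with $H(z_1)\in U$. Since $(\phi\circ H)'=1$ wherever $H\ne a$, you get $\phi(H(z))=\phi(H(z_1))+(z-z_1)$ on the segment from $z_1$ to $z^*=z_1-\phi(H(z_1))$. So $|\phi\circ H|$ decreases along it, $H$ cannot leave $U$ (in particular it meets no pole), and $H(z^*)=a$. This contradicts the omission of $a$; it does not contradict $H$ being defined on all of $\C$, as you phrased it.

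Likewise, Picard does not supply a zero of $H-a$ as your first paragraph suggests. The attained/omitted dichotomy you fall back on is the right formulation, and your later appeal to Picard in the omitted-values paragraph is never actually used.
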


\begin{proof}
View $b(s)\partial_s$ as a meromorphic vector field on $\PP^1$.
At a zero of this vector field, local uniqueness for a holomorphic
autonomous equation implies that a solution attaining that zero is
constant. A nonconstant solution also cannot attain a pole of the
vector field. Indeed, in a local coordinate vanishing at such a pole,
the derivative of a holomorphic map has no pole, whereas substitution
in the vector field would have one. This reasoning includes $s=\infty$
after replacing $s$ by $1/s$.

A nonzero rational vector field on $\PP^1$ has a zero or a pole.
Consequently $H$ omits at least one value. Picard's theorem says that
it omits at most two values; we use its meromorphic form as in
\cite{Hayman1964,Laine1993}.

If there is one omitted value, move it to infinity. The resulting
coefficient has neither zeros nor poles in $\C$, and is a nonzero
constant. Thus the transformed solution is $az+b_0$, $a\ne0$, and a
further affine change gives $z$.

If there are two omitted values, move them to $0$ and infinity. The
transformed solution is an entire, nowhere-zero function $G$, and its
equation has the form
\[
 G'=cG^j,\qquad c\ne0,\quad j\in\Z.
\]
For $j\ne1$, differentiation gives
\[
 G^{1-j}=(1-j)cz+c_0.
\]
The left-hand side is entire and nowhere zero, whereas the right-hand
side has a zero. Hence $j=1$. We obtain $G=c_1e^{cz}$, and division
by $c_1$ gives the required normalization.
\end{proof}

Applied to $H=\iota(h)$, the lemma changes only the coordinate within
$K=\C(h)$. It does not change $t$ or the independent variable $z$.
After this normalization, injectivity of $\iota$ yields respectively
\begin{equation}
 Dh=1\quad\text{or}\quad Dh=\lambda h.
 \label{eq:base-normal-forms}
\end{equation}

\section{Ramification of a rational eigenfunction}
\label{sec:ramification}

The next lemma is the local algebraic input. It applies to an arbitrary
smooth projective curve over a differential coefficient field.

\begin{lemma}
\label{lem:branch}
Let $\Gamma/K$ be a smooth projective geometrically integral curve and
let $D$ be regular on its geometric curve. Suppose that
$t\in K(\Gamma)\setminus K$ satisfies $Dt=t$. If
$a\in\overline K$ is a finite branch value of
$t:\Gamma_{\overline K}\to\PP^1_{\overline K}$, then
\begin{equation}
 Da=a.
 \label{eq:branch-equation}
\end{equation}
\end{lemma}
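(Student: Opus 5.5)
Work at a ramification point $p\in\Gamma_{\overline K}$ lying over the finite branch value $a$. Let $e\geq2$ be the ramification index of $t$ at $p$, and let $\pi$ be a uniformizer of the local ring $\OO=\OO_{\Gamma_{\overline K},p}$.

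The element $g=t-a$ lies in $\OO$, and $\ord_p g=e$ because $p$ lies above $a$ with index $e$. Write $g=u\pi^e$ with $u$ a unit of $\OO$.

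By the extension \eqref{eq:algebraic-derivative} of $D$ to $\overline K$, we have $Da\in\overline K$, so
\[
 Dg=Dt-Da=t-Da=g+(a-Da).
\]
The left-hand side is computed from regularity \eqref{eq:geometric-regularity}. Since $D\OO\subseteq\OO$, both $D\pi$ and $Du$ lie in $\OO$. Then
\[
 Dg=(Du)\pi^e+eu\pi^{e-1}D\pi ,
\]
so $\ord_p(Dg)\geq e-1\geq1$, and $Dg$ lies in the maximal ideal at $p$.

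This is the point where one must not assume $D$ preserves the maximal ideal, as the remark after \eqref{eq:geometric-regularity} warns. Nothing forces $D\pi$ to vanish at $p$. Only the drop of order by at most one is used, and that is why ramification ($e\geq2$) is essential.

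Now compare the two expressions at $p$. The term $g$ vanishes at $p$, and so does $Dg$. Hence the constant $a-Da\in\overline K$ vanishes at $p$. An element of $\overline K$ vanishing at a point of $\Gamma_{\overline K}$ is $0$, since nonzero elements of $\overline K$ are units in every local ring (Remark~\ref{rem:semilinear}). Therefore $Da=a$.

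The main obstacle is justifying the setup over $\overline K$:
\begin{itemize}
\item the extension of $D$ to $\overline K(\Gamma)$ is compatible with $Dt=t$;
\item $t\notin K$ gives a nonconstant, hence finite, map to $\PP^1_{\overline K}$, so branch points have a well-defined index;
\item $a$, as a branch value defined over $\overline K$, is a genuine element of $\overline K$, so that $Da$ makes sense.
\end{itemize}
All three follow from Lemma~\ref{lem:base-change} and the uniqueness of algebraic derivative extensions.
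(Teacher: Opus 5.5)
Your proof is correct and is essentially the paper's argument. You write $t-a=\xi^e v$ at a ramification point and use only preservation of the local ring (not of its maximal ideal) to get $D(t-a)\in(\xi)$ because $e\geq 2$; reducing $t-Da$ modulo the maximal ideal then gives $a-Da=0$. Your closing appeal to Lemma~\ref{lem:base-change} is unnecessary, since regularity on the geometric curve is already a hypothesis of the statement.
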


\begin{proof}
Choose a ramification point $p$ above $a$ and a uniformizer $\xi$ in
the discrete valuation ring $\OO_{\Gamma_{\overline K},p}$. If the
ramification index is $e\geq2$, write
\begin{equation}
 t-a=\xi^e v,
 \qquad v\in\OO_{\Gamma_{\overline K},p}^{\times}.
 \label{eq:local-ramification}
\end{equation}
Both $D\xi$ and $Dv$ lie in this ring. Therefore
\[
 D(t-a)=e\xi^{e-1}vD\xi+\xi^eDv\in(\xi).
\]
On the other hand, $D(t-a)=t-Da$. Reducing the latter identity modulo
the maximal ideal gives $a-Da=0$.
\end{proof}

\begin{remark}
It is not necessary to know that $D\xi\in(\xi)$. The factor
$\xi^{e-1}$ makes the first term vanish modulo $(\xi)$ precisely
because $e\geq2$. This is why preservation of the local ring suffices,
even though preservation of its maximal ideal need not hold.
\end{remark}

The local differential equation for the branch value becomes a global
algebraic obstruction once one uses the distinguished meromorphic
trajectory.

\begin{lemma}
\label{lem:no-finite-branch}
Suppose that $L$ has an injective differential homomorphism
$\iota:L\to\MM(\C)$ and that $h,t\in L$ are algebraically
independent, with
\[
 Dh\in\C(h),\qquad Dt=t,\qquad\iota(t)=e^z.
\]
Then the equation $Da=a$ has no nonzero solution algebraic over
$\C(h)$.
\end{lemma}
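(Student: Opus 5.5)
Suppose $a\neq0$ is algebraic over $\C(h)$ with $Da=a$. The plan is to show that $t/a$ is a nonzero constant, which would make $t$ algebraic over $\C(h)$ and contradict independence.

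First we must make sense of $a$ inside an ambient field. The derivation extends uniquely to $\overline{\C(h)}$ by \eqref{eq:algebraic-derivative}. Since $t$ is transcendental over $\C(h)$, we may form $M=\C(h,a)(t)$. The field $\C(h,a)$ is a finite differential extension of $\C(h)$, and $M$ carries the derivation with $Dt=t$.

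Next we take the quotient $g=t/a\in M$. It satisfies
\[
 Dg=\frac{aDt-tDa}{a^2}=\frac{at-ta}{a^2}=0.
\]
So $g$ is a first integral in $M$, which is transcendental over $\C(h,a)$ because $t$ is. We cannot directly apply the no-first-integral statement, since that used $\iota$ on $L$ and $M\not\subset L$ in general. Instead we use the minimal polynomial of $a$ over $\C(h)$, say $p(Y)=Y^n+c_{n-1}Y^{n-1}+\dots+c_0$ with $c_j\in\C(h)$.

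The relation $Da=a$ says that every conjugate $a_k$ of $a$ also satisfies $Da_k=a_k$: the derivation on the algebraic closure commutes with the Galois action, because the extension is unique. Hence each $c_j$, being up to sign an elementary symmetric function of degree $n-j$ in the conjugates, satisfies $Dc_j=(n-j)c_j$. In particular $c_0=\pm\prod a_k$ is a nonzero element of $\C(h)\subset L$ with $Dc_0=nc_0$. Then
\[
 D\bigl(t^n/c_0\bigr)=\frac{nt^n c_0-t^n\,nc_0}{c_0^2}=0,
\]
and $t^n/c_0\in L$.

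By the no-first-integral property, which follows from injectivity of $\iota$ since $\iota(G)'=0$ forces $\iota(G)$ to be constant, we get $t^n=\gamma c_0$ with $\gamma\in\C^*$. Thus $t$ is algebraic over $\C(h)$, contradicting the algebraic independence of $h$ and $t$. Equivalently, $\iota(c_0(h))=\gamma^{-1}e^{nz}$. This completes the argument.

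The main obstacle is the step that descends from the possibly non-rational $a$ to an element of $L$. The norm $c_0$ does this, and it needs the fact that conjugates of $a$ solve the same equation. I would justify that by noting that for $\sigma\in\mathrm{Aut}(\overline{\C(h)}/\C(h))$, the map $\sigma D\sigma^{-1}$ is another extension of $D|_{\C(h)}$, hence equals $D$ by uniqueness. So $D(\sigma a)=\sigma(Da)=\sigma a$. We also need $c_0\neq0$, which holds since $p$ is irreducible and $a\neq0$.
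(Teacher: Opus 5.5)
Your proof is correct, and it takes a genuinely different route from the paper's.

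The paper works analytically along the trajectory. It chooses a time $z_0$ at which $H(z_0)$ avoids the branch and zero loci of the algebraic function $a$. It composes a local holomorphic branch of $a$ with $H$ and identifies the complex derivative of this composition with the extended derivation \eqref{eq:algebraic-derivative}. Solving the resulting equation gives $a(H(z))=Ce^z$ on a disc. The paper then transports the relation $P(h,a)=0$ to $P(H,Ce^z)\equiv0$ by the identity theorem, and pulls it back through $\iota$ to $P(h,Ct)=0$.

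You stay entirely inside the differential algebra. The norm $c_0=\pm\prod_k a_k$ descends the eigen-relation to $\C(h)\subset L$ with eigenvalue $n$. Hence $t^n/c_0$ is a rational first integral in $L$. The absence of nonconstant rational first integrals, already noted after \eqref{eq:base-autonomous}, then forces $t^n\in\C(h)$.

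What your route buys:
\begin{itemize}
\item It needs no base point, no local branches, and no check that the formal derivative of an algebraic element matches the complex derivative of a branch composed with $H$.
\item It shows the lemma holds whenever the constant field of $L$ is $\C$. The map $\iota$ is used only to guarantee this, and the normalization $\iota(t)=e^z$ is never needed.
\item The same idea, with the trace in place of the norm, gives the additive analogue in Section~\ref{sec:additive}: $t-\mathrm{Tr}(a)/n$ is a first integral.
\end{itemize}
The paper's route, by contrast, gives an explicit local picture of the branch value along the trajectory, namely $a(H(z))=Ce^z$.

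A small simplification: your Galois-conjugation step can be replaced by differentiating $p(a)=0$. Using $Da=a$, the polynomial $\sum_j(Dc_j+jc_j)Y^j\in\C(h)[Y]$ vanishes at $a$ and has leading coefficient $n$. It therefore equals $np$, which gives $Dc_j=(n-j)c_j$ directly.
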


\begin{proof}
Suppose that such an $a$ exists. Set $H=\iota(h)$. Outside a finite set
in the $h$-sphere, the algebraic function $a$ has unramified
holomorphic branches, and a nonzero branch is nonvanishing away from
another finite set. Since the nonconstant meromorphic function $H$
omits at most two values, we may choose $z_0$ for which $H(z_0)$ avoids
all these exceptional values. The derivative of the resulting local
branch, composed with $H$, agrees with the uniquely extended derivation in
\eqref{eq:algebraic-derivative}. On a sufficiently small disc,
\[
 \frac{\mathrm d}{\mathrm dz}a(H(z))=a(H(z)).
\]
Consequently $a(H(z))=Ce^z$ there, for a nonzero constant $C$.

Let $P(X,Y)\in\C[X,Y]\setminus\{0\}$ be a polynomial relation for
$a$ over $\C(h)$. Then
\[
 P(H(z),Ce^z)=0
\]
on that disc, hence identically as a meromorphic identity. Injectivity
of $\iota$ gives $P(h,Ct)=0$ in $L$. Since $C\ne0$, this is a
nonzero polynomial relation between $h$ and $t$, a contradiction.
\end{proof}

Only a local branch of $a$ was used. No assertion that the algebraic
branch value extends single-valuedly over the time plane is required.
It is the polynomial identity, rather than the chosen branch, that
extends globally.

\begin{proposition}
\label{prop:two-point}
Let $K=\C(h)$ be relatively algebraically closed in a field $L$ of
transcendence degree one over $K$. Suppose that $L$ is the function
field of a smooth projective curve $\Gamma/K$, that $D$ is regular on
its geometric curve, and that $L$ admits an injective differential map
to $\MM(\C)$. If $h,t$ are algebraically independent,
$Dt=t$ and $\iota(t)=e^z$, then there exist
$u\in L$, $d\in\N$ and $c\in K^*$ such that
\begin{equation}
 L=K(u),\qquad t=c(h)u^d.
 \label{eq:kummer-form}
\end{equation}
\end{proposition}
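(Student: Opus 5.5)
We view $t$ as a nonconstant element of $L=K(\Gamma)$, hence as a finite map $t:\Gamma_{\overline K}\to\PP^1_{\overline K}$ of some degree $n\ge1$. It is nonconstant because $t\notin K$; indeed $t$ is transcendental over $K$, since $h,t$ are algebraically independent and $K=\C(h)$.

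\emph{Step 1: only $0$ and $\infty$ can be branch values.} Let $a\in\overline K$ be a finite branch value. Lemma~\ref{lem:branch} gives $Da=a$, and $a$ is algebraic over $K=\C(h)$. Lemma~\ref{lem:no-finite-branch} then forces $a=0$, using $Dh\in\C(h)$, the injective differential map $\iota$, and independence. So every branch value of $t$ lies in $\{0,\infty\}$.

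\emph{Step 2: the cover is cyclic.} Since $\Gamma_{\overline K}$ is connected and $\overline K$ is algebraically closed of characteristic zero, Riemann--Hurwitz over $\overline K$ applies:
\[
 2g-2=-2n+\sum(e_p-1)\le -2n+(n-1)+(n-1)=-2.
\]
Hence $g=0$ and both $0$ and $\infty$ are totally ramified, each with a single point above it. Let $p_0,p_\infty$ be these points. Then $t$ has divisor $n(p_0-p_\infty)$ on $\Gamma_{\overline K}\cong\PP^1$. Take a coordinate $\tilde u$ with divisor $p_0-p_\infty$; then $t=\gamma\tilde u^{n}$ with $\gamma\in\overline K^*$, and we will set $d=n$.

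\emph{Step 3: descent to $K$.} This is the step I expect to be the main obstacle. The points $p_0,p_\infty$ are the unique points above $t=0$ and $t=\infty$. They are therefore fixed by $\mathrm{Gal}(\overline K/K)$ and are $K$-rational points of $\Gamma$. A smooth projective geometrically integral genus-zero curve with a rational point is $\PP^1_K$. The divisor $p_0-p_\infty$ is $K$-rational and has degree zero, so it is the divisor of some $u\in L$ with $L=K(u)$. Then $t/u^d$ has trivial divisor on $\Gamma_{\overline K}$, so it is a constant in $\overline K$. Being an element of $L$ algebraic over $K$, relative algebraic closedness puts it in $K^*$. Call it $c(h)$. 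This gives \eqref{eq:kummer-form}.

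The points to check carefully are two. First, $p_0$ and $p_\infty$ are Galois-stable as points of $\Gamma_{\overline K}$, which follows from the uniqueness of the fibre points. Second, the appeal to the fact that a rational point on a conic yields $L=K(u)$ via Riemann--Roch for the divisor $p_\infty$. No differential input is needed in this step; regularity of $D$ was used only in Step 1.
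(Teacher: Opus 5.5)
Your proposal is correct and follows the paper's proof essentially step for step: you confine the branch values to $\{0,\infty\}$ via Lemmas~\ref{lem:branch} and~\ref{lem:no-finite-branch}, use Riemann--Hurwitz to force $g=0$ with a single point over each of $0$ and $\infty$, descend $p_0,p_\infty$ to $K$-rational points by Galois invariance, and then show that $t/u^d$ has trivial divisor. The differences are only cosmetic. You phrase Riemann--Hurwitz as an inequality, and you conclude $t/u^d\in K^*$ via relative algebraic closedness rather than via global regular functions on the curve. You also take connectedness of $\Gamma_{\overline K}$ for granted, whereas the paper notes that it follows from relative algebraic closedness of $K$ in $L$ in characteristic zero.
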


\begin{proof}
Relative algebraic closedness and characteristic zero imply that the
generic curve is geometrically integral; compare
\cite[Chapter~III]{Stichtenoth2009}. The nonconstant function $t$
defines a finite separable map of degree $d$ on that curve. By
Lemmas~\ref{lem:branch} and~\ref{lem:no-finite-branch}, all geometric
branch values are contained in $\{0,\infty\}$.

Let $r_0$ and $r_\infty$ denote the respective numbers of geometric
points over $0$ and infinity. The contributions to the ramification
divisor over these two values are $d-r_0$ and $d-r_\infty$. Since
there is no other ramification, Riemann--Hurwitz gives
\begin{equation}
 2g(\Gamma)-2
 =-2d+(d-r_0)+(d-r_\infty)
 =-r_0-r_\infty.
 \label{eq:rh-two-point}
\end{equation}
The two point counts are positive, and the genus is nonnegative.
It follows that
\begin{equation}
 g(\Gamma)=0,\qquad r_0=r_\infty=1.
 \label{eq:genus-zero}
\end{equation}
We use Riemann--Hurwitz in its standard form for smooth projective
curves; see \cite[Chapter~IV, Section~2]{Hartshorne1977} or
\cite[Chapter~III]{Stichtenoth2009}.

Let $p_0,p_\infty$ be the unique geometric points above the two values.
They are fixed by the absolute Galois group of $K$, and therefore are
$K$-rational because $K$ is perfect. A genus-zero curve with a
$K$-rational point is isomorphic to $\PP^1_K$. More explicitly,
Riemann--Roch~\cite[Chapter~IV, Section~1]{Hartshorne1977} provides a function
with a simple pole at $p_\infty$; after subtracting its value at
$p_0$, it has a simple zero there. Choose this function as $u$.

The divisor of $t$ is $d(p_0-p_\infty)$, so $t/u^d$ has zero divisor.
On the geometrically integral projective curve its global regular
functions are $K$. Thus $t/u^d=c\in K^*$, proving
\eqref{eq:kummer-form} over $K$ itself.
\end{proof}

The descent of $p_0,p_\infty$ in this proof is useful: a description
of the cover only after extending $K$ would not yet give a meromorphic
function $U=\iota(u)$ on $\C$. Formula~\eqref{eq:kummer-form} is an
identity inside the original field $L$.

\section{Comparison with the original coefficient field}
\label{sec:comparison}

We now isolate the field-theoretic conclusion that completes the
algebraicity argument. Its assumptions are exactly the properties
obtained in Section~\ref{sec:guillot}.

\begin{proposition}
\label{prop:comparison}
Let $L=\C(t,w)$ with $t,w$ algebraically independent. Let $D$ be a
derivation with $Dt=t$, and suppose that an injective differential
homomorphism $\iota:L\to\MM(\C)$ satisfies $\iota(t)=e^z$.
Assume that there is a subfield $K=\C(h)$ relatively algebraically
closed in $L$ such that $D(K)\subset K$ and $D$ is regular on the
geometric generic curve of $L/K$. Then $Dw$ is a polynomial of degree
at most two in $w$, with coefficients in $\C(t)$.
\end{proposition}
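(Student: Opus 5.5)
Split according to whether $h$ and $t$ are algebraically dependent.

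\emph{Case 1: $h$ and $t$ are algebraically dependent.} Then $t$ is algebraic over $K=\C(h)$. Relative algebraic closedness gives $t\in K$. Symmetrically, $h$ is algebraic over $\C(t)$, and $\C(t)$ is relatively algebraically closed in $L$ by Lemma~\ref{lem:relative}, so $h\in\C(t)$. Hence $K=\C(h)=\C(t)$. The generic curve of $L/K$ is then $\PP^1_K$ with coordinate $w$, since $L=\C(t)(w)$. Regularity on the geometric generic fibre and Lemma~\ref{lem:riccati} give $Dw=a_2w^2+a_1w+a_0$ with $a_j\in\C(t)$, which is the claim.

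\emph{Case 2: $h$ and $t$ are algebraically independent.} Apply Proposition~\ref{prop:two-point} to obtain $u\in L$, $d\in\N$ and $c\in K^*$ with $L=K(u)$ and $t=c(h)u^d$. Differentiating,
\[
t = Dt = (Dc)u^d + d\,c\,u^{d-1}Du,
\]
so $Du=\frac{1-Dc/c}{d}\,u$ is linear in $u$ over $K$. The goal is to show $d=1$ and then change generators.

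To get $d=1$, look at divisors on the base. Set $U=\iota(u)$, which is meromorphic on $\C$, and $H=\iota(h)$. Then $e^z=c(H)U^d$, so $c(H(z))$ has all its zeros and poles of order divisible by $d$. By Lemma~\ref{lem:autonomous}, after a Möbius change inside $K$, $H$ is $z$ or $e^{\lambda z}$. In either normal form $H$ attains every value of $\C$ (respectively $\C^*$) with multiplicity one. So at each zero or pole $\alpha\ne0,\infty$ of $c$, the order $\ord_\alpha c$ is divisible by $d$. Write $c=\gamma\,h^{k}g^d$ with $g\in K$ (in the $e^{\lambda z}$ form, the points $0,\infty$ of the $h$-sphere are allowed to carry arbitrary order).

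Absorb $g$ into $u$, replacing $u$ by $gu$, which still generates $L$ over $K$. This gives $t=\gamma h^k u^d$.

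The main obstacle is removing $h^k$ when $d\nmid k$. The plan is to use $Dh=\lambda h$ and $Dt=t$. Comparing logarithmic derivatives gives $Du/u=(1-k\lambda)/d\in\C$. One would then pick integers $m,n$ with $u'=u^mh^n$ such that $t=\gamma' u'^{d'}$ with $d'=\gcd(d,k)$, with $u'$ still a $K$-generator only if $\gcd(m,d)=1$. I am not certain this step is right and would check it carefully. Once $v^{d'}=\gamma' t$ with $v\in L$, Lemma~\ref{lem:relative} forces $d'=1$.

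If a reduction to $d=1$ succeeds, then $u=t/(\gamma h^k)$ and $L=K(u)=\C(h,t)$. Since $L=\C(t,w)$ has transcendence degree one over $\C(t)$, it follows that $L=\C(t)(h)$ and $h$ is transcendental over $\C(t)$. Now $Dh=b(h)$ with $b$ of degree $\le1$, which is regular on $\PP^1_{\overline{\C(t)}}$. Since $\C(t)(h)=\C(t)(w)$, Corollary~\ref{cor:change-generator} applied over $\C(t)$ gives that $Dw$ is a polynomial of degree at most two in $w$ over $\C(t)$.

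(A fallback for the $d=1$ step: the transfer to $\C(t)$ only needs some generator of $L$ over $\C(t)$ with Riccati derivative. The candidates are $h$ together with $u$ subject to $t=c u^d$. The map $\C(t)(h)\to L$ has degree $d$ by this relation, so the field equality must still come from the Kummer relation and Lemma~\ref{lem:relative}.)
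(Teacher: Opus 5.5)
\textbf{Verdict: there is a genuine gap in the multiplicative subcase of Case~2.}

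\textbf{What is correct.} Your Case~1 matches the paper's dependent case. The additive subcase $H=z$ of Case~2 also works, with one point you should state explicitly: $H=z$ also attains $0$, so $d\mid\ord_0 c$. The factor $h^k$ is therefore absorbed too, leaving $t=\gamma u^d$, and Lemma~\ref{lem:relative} gives $d=1$.

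\textbf{The gap.} In the subcase $H=e^{\lambda z}$, your stated goal $d=1$ is false in general. Done correctly, your $u'$ step is $u'=h^{k/g}u^{d/g}$ with $g=\gcd(k,d)$, whose $g$th power is $t/\gamma$. It need not be a $K$-generator, since Lemma~\ref{lem:relative} only needs it to lie in $L$. That lemma then gives $\gcd(k,d)=1$ and nothing more.

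Here is an example satisfying every hypothesis of the proposition with $d=2$. Take $L=\C(h,v)$ with $t=hv^2$, $Dh=\sqrt2\,h$, $Dv=\frac{1-\sqrt2}{2}v$, $\iota(h)=e^{\sqrt2 z}$, $\iota(v)=e^{(1-\sqrt2)z/2}$, and $w=v$. Then:
\begin{itemize}
\item $\iota(t)=e^z$ and $Dt=t$;
\item $\iota$ is injective because the two exponents are $\Q$-linearly independent;
\item $L=\C(t,v)$, and $K=\C(h)$ is relatively algebraically closed in $L=K(v)$;
\item $D$ is regular on the generic fibre.
\end{itemize}
Yet $k=1$ and $d=2$. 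Here $\C(t)(h)=\C(h,v^2)$ has index $2$ in $L$. So your final step ``$L=K(u)=\C(h,t)$, hence $L=\C(t)(h)$'' fails. As your fallback paragraph itself computes, $[L:\C(t,h)]=d$, and Lemma~\ref{lem:relative} cannot force this to be $1$.

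\textbf{The missing idea.} When $d>1$, $h$ is the wrong generator over $\C(t)$. Instead, complete the primitive exponent vector $(k,d)$ to a unimodular basis. Choose integers $a,b$ with $kb-da=1$ and set $y=h^au^b$. Then
\[
 h=(t/\gamma)^b y^{-d},\qquad u=(t/\gamma)^{-a}y^k,
\]
so $L=\C(h,u)=\C(t,y)$. Moreover $Dy=(a\lambda+b\mu)y$ with $\mu=(1-k\lambda)/d$, which is linear in $y$. Corollary~\ref{cor:change-generator} over $\C(t)$ then gives Riccati type for $Dw$.

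This is exactly how the paper concludes. The monomial change is a birational automorphism of the two-dimensional torus, not a covering. That is what lets the comparison stay over $\C(t)$ without claiming $d=1$.
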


\begin{proof}
We distinguish whether the two base functions are algebraically
dependent.

\smallskip\noindent
\emph{The dependent case.}
If $h$ is algebraic over $\C(t)$, Lemma~\ref{lem:relative} implies
$h\in\C(t)$. Since $h$ is nonconstant, $t$ is then algebraic over
$\C(h)$. Relative algebraic closedness of $K$ gives $t\in K$, hence
\begin{equation}
 K=\C(t).
 \label{eq:equal-bases}
\end{equation}
The smooth projective curve with field $L/K=\C(t)(w)$ is
$\PP^1_K$. Regularity and Lemma~\ref{lem:riccati} give the desired
conclusion in the original coordinate $w$.

\smallskip\noindent
\emph{The independent case.}
Suppose that $h,t$ are algebraically independent. By
Proposition~\ref{prop:two-point},
\begin{equation}
 L=\C(h,u),\qquad t=c(h)u^d.
 \label{eq:comparison-kummer}
\end{equation}
Put $H=\iota(h)$ and $U=\iota(u)$. Both are global meromorphic
functions. Lemma~\ref{lem:autonomous} permits a change of the base
coordinate so that $H=z$ or $H=e^{\lambda z}$. Such a change leaves
$K$ unchanged and merely rewrites the rational function $c$.

If $H=z$, substitution in \eqref{eq:comparison-kummer} gives
\begin{equation}
 e^z=c(z)U(z)^d.
 \label{eq:additive-divisor-identity}
\end{equation}
At every $a\in\C$, the left side has order zero. Therefore
\[
 \ord_a c+d\,\ord_a U=0.
\]
In particular, every finite zero and pole of $c$ has order divisible
by $d$. Factoring this rational function into linear factors yields
\begin{equation}
 c(h)=c_0r(h)^d,
 \qquad c_0\in\C^*,\quad r\in\C(h)^*.
 \label{eq:additive-divisibility}
\end{equation}
Set $v=r(h)u$. Then $L=\C(h,v)$ and $t=c_0v^d$.
Lemma~\ref{lem:relative} gives $d=1$. Consequently
\begin{equation}
 L=\C(t,h),\qquad Dh=1.
 \label{eq:additive-product}
\end{equation}
Use Corollary~\ref{cor:change-generator} over $\C(t)$, with $h$ as
the rational generator. It follows that $Dw$ is of Riccati type.

Suppose instead that $H=e^{\lambda z}$, $\lambda\ne0$. For every
$a\in\C^*$ there is a time $z_0$ with $H(z_0)=a$, and
$H'(z_0)=\lambda a\ne0$. Comparing orders in
\[
 e^z=c(H(z))U(z)^d
\]
at this time proves
\begin{equation}
 d\mid\ord_a c\qquad(a\in\C^*).
 \label{eq:multiplicative-divisibility}
\end{equation}
The two omitted base values are precisely the ones at which this
argument imposes no separate divisibility condition. Factorization in
$\C(h)$ now gives
\begin{equation}
 c(h)=c_0h^k r(h)^d,
 \qquad c_0\in\C^*,\quad k\in\Z,
 \quad r\in\C(h)^*.
 \label{eq:monomial-coefficient}
\end{equation}
With $v=r(h)u$ we obtain
\begin{equation}
 L=\C(h,v),\qquad t=c_0h^kv^d.
 \label{eq:monomial-map}
\end{equation}
Injectivity gives $Dh=\lambda h$. Differentiating the monomial
identity gives
\begin{equation}
 Dv=\mu v,\qquad \mu=\frac{1-k\lambda}{d}.
 \label{eq:diagonal-derivation}
\end{equation}

Let $g=\gcd(k,d)$, taken positive. If $g>1$, then the element
$h^{k/g}v^{d/g}\in L$ has $g$th power $t/c_0$. This contradicts
Lemma~\ref{lem:relative}. Thus $\gcd(k,d)=1$.
Choose integers $a,b$ with
\begin{equation}
 kb-da=1,
 \label{eq:bezout}
\end{equation}
and put $y=h^av^b$. The inverse change of variables is explicit:
\begin{equation}
 h=(t/c_0)^b y^{-d},\qquad
 v=(t/c_0)^{-a}y^k.
 \label{eq:inverse-monomial}
\end{equation}
Hence
\begin{equation}
 L=\C(t,y),\qquad Dy=(a\lambda+b\mu)y.
 \label{eq:original-base-product}
\end{equation}
Corollary~\ref{cor:change-generator}, again over the original base
$\C(t)$, proves the assertion for $Dw$.
\end{proof}

\begin{remark}[Two uses of relative algebraic closedness]
The condition on $\C(h)$ comes from connectedness of the new
fibration. The condition on $\C(t)$ comes independently from the
original presentation $L=\C(t)(w)$. The latter excludes roots of $t$
\emph{inside $L$}; it does not exclude the functions $e^{z/d}$ from
$\MM(\C)$. Confusing these two ambient fields would invalidate the
root argument.
\end{remark}

\begin{remark}
In \eqref{eq:monomial-map}, the condition $\gcd(k,d)=1$ states that
the exponent vector of $t$ is primitive in $\Z^2$. Equation
\eqref{eq:bezout} completes it to an integral basis. Thus the passage
to $y$ is a birational change on a two-dimensional algebraic torus,
not a finite covering. This is what allows the final generator change
to take place over $\C(t)$ without a further extension of that field.
\end{remark}

\begin{corollary}[Algebraicity of non-Riccati solutions]
\label{cor:algebraicity}
Under the hypotheses of Theorem~\ref{thm:main}, every meromorphic
solution $f$ is algebraic over $\C(e^z)$.
\end{corollary}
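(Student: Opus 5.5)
I argue by contradiction. Suppose $R$ is not of Riccati type and that a meromorphic solution $f$ of \eqref{eq:main-equation} is transcendental over $\C(e^z)$. Since $e^z$ is transcendental over $\C$, the pair $e^z,f$ is then algebraically independent over $\C$. So the substitution $\iota$ of \eqref{eq:evaluation} is an injective differential homomorphism from $L=\C(t,w)$, equipped with the derivation $D$ of \eqref{eq:ambient-derivation}, into $\MM(\C)$, with $\iota(t)=e^z$. In particular $Dt=t$.

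The first step builds the preserved fibration. By Lemma~\ref{lem:maximal}, the map $\Phi(z)=(e^z,f(z))$, restricted to the complement of the polar preimage, is a single-valued maximal trajectory of the vector field $X=D$ on the rational surface $S_0=\PP^1\times\PP^1$, and it is Zariski dense. Theorem~\ref{thm:guillot} then gives a smooth birational model carrying a connected fibration over $\PP^1$ that is preserved by the flow and has vertical polar divisor. Lemma~\ref{lem:base-change} translates this into a subfield $K=\C(h)\subset L$ with three properties: $K$ is relatively algebraically closed in $L$, $D(K)\subset K$, and $D$ is regular on the geometric generic fibre. The birational change does not alter $L$, $D$, or $\iota$, so the identities $Dt=t$ and $\iota(t)=e^z$ remain valid (Remark~\ref{rem:time}).

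The second step checks the hypotheses of Proposition~\ref{prop:comparison}. Here $t,w$ are algebraically independent, $Dt=t$, $\iota$ is injective with $\iota(t)=e^z$, and $K$ has the three properties just listed. The proposition therefore shows that $Dw=R(t,w)$ is a polynomial of degree at most two in $w$ with coefficients in $\C(t)$. That is exactly Riccati type \eqref{eq:riccati-type}, which contradicts the hypothesis of Theorem~\ref{thm:main}. Hence $f$ is algebraic over $\C(e^z)$.

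The only delicate point is matching hypotheses. Guillot's theorem requires the trajectory to be Zariski dense and maximal, and this needs transcendence of $f$ over $\C(e^z)$, not merely over $\C$. Algebraic independence of $e^z$ and $f$ follows directly from that transcendence. I would also note that $D$ is the derivation attached to $X$ itself, not a rescaling of it. The normalization $Dt=t$ is then literally preserved, which the independent case of Proposition~\ref{prop:comparison} requires.
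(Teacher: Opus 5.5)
Your proposal is correct and follows the paper's proof: contradiction via the injective differential evaluation map, Section~\ref{sec:guillot} (Lemma~\ref{lem:maximal}, Theorem~\ref{thm:guillot}, Lemma~\ref{lem:base-change}) supplying $K=\C(h)$ with a regular geometric generic curve, and Proposition~\ref{prop:comparison} forcing Riccati type. One small attribution point: relative algebraic closedness of $K$ in $L$ comes from the connected fibres via Stein factorization (remark after Lemma~\ref{lem:relative}), while Lemma~\ref{lem:base-change} itself only gives $D(K)\subseteq K$ and regularity.
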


\begin{proof}
A transcendental solution would provide the injective differential map
\eqref{eq:evaluation}. Section~\ref{sec:guillot} supplies the subfield
$K$ and the regular generic curve required by
Proposition~\ref{prop:comparison}. That proposition forces
$Dw=R(t,w)$ to be of Riccati type, contrary to the hypothesis.
\end{proof}

Notice that Corollary~\ref{cor:algebraicity} does not assert that every
local solution of the equation is single-valued. It uses one specified
global meromorphic solution, and concludes that this solution is
algebraic over the coefficient field. Special algebraic trajectories
can meet the polar locus of the ambient vector field; they were never
subject to the Zariski-density argument.

\section{Algebraic descent and the exact deck period}
\label{sec:descent}

This section no longer uses a differential equation. The only
assumptions are algebraicity over $\C(e^z)$ and meromorphic
single-valuedness on $\C$.

\begin{lemma}[A finite integer period]
\label{lem:finite-period}
If $f\in\MM(\C)$ is algebraic over $\C(e^z)$, then
$f(z+2\pi i q)=f(z)$ for some positive integer $q$.
\end{lemma}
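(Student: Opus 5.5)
The plan is to use the translation $\tau:z\mapsto z+2\pi i$, which fixes every element of $\C(e^z)$, and to show that it permutes a finite set of meromorphic functions containing $f$. Let
\[
 p(Y)=Y^m+a_{m-1}Y^{m-1}+\dots+a_0,\qquad a_j\in\C(e^z),
\]
be the minimal polynomial of $f$ over $\C(e^z)$. After substituting $e^z$, each coefficient $a_j$ is a $2\pi i$-periodic meromorphic function.

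For every integer $n$, the function $f_n(z)=f(z+2\pi i n)$ is again meromorphic on $\C$. Translating the identity $p(f)=0$ gives $p_n(f_n)=0$, where the coefficients of $p_n$ are $a_j(z+2\pi i n)=a_j(z)$. So every $f_n$ is a root of the same polynomial $p$, regarded as a polynomial with coefficients in the field $\MM(\C)$.

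The key step is that a polynomial of degree $m$ over the field $\MM(\C)$ has at most $m$ roots in that field. This holds because $\MM(\C)$ is a field, as $\C$ is connected, and a nonzero polynomial over an integral domain has at most $m$ roots. Hence the functions $f_0,f_1,\dots,f_m$ cannot all be distinct, so $f_n=f_{n'}$ for some $0\le n<n'\le m$. Translating by $-2\pi i n$ gives $f(z+2\pi i q)=f(z)$ with $q=n'-n$, and $1\le q\le m$.

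I do not expect a genuine obstacle. The one point that needs care is that equality of roots is equality in $\MM(\C)$, that is, equality as global meromorphic functions. This is exactly why single-valuedness on all of $\C$ is essential: for a multivalued algebraic function, translation would only permute its local branches. As a by-product the argument gives the bound $q\le m$, which should feed into the exact-period statement of Theorem~\ref{thm:primitive-intro}.
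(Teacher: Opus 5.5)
Your proof is correct and is essentially the paper's argument: the translates $f(z+2\pi i j)$, $0\le j\le m$, are roots of one nonzero polynomial over the field $\MM(\C)$, so two of them coincide. The only cosmetic difference is that the paper uses an arbitrary relation $P(e^z,Y)$ with $P\in\C[t,Y]$ and checks separately that it is not identically zero, whereas your monic minimal polynomial makes this automatic and gives the bound $q\le m$ directly.
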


\begin{proof}
Choose $P(t,Y)\in\C[t,Y]\setminus\{0\}$ with
$P(e^z,f(z))=0$, and let $n=\deg_Y P$. Each function
\[
 f_j(z)=f(z+2\pi i j),\qquad j\in\Z,
\]
is a root of the same polynomial $P(e^z,Y)$ over the field
$\MM(\C)$. This polynomial is nonzero, since a nonzero polynomial
in $e^z$ cannot vanish identically. It has at most $n$ distinct roots.
Two of $f_0,\ldots,f_n$ are therefore identical, and their index
difference is a positive integer period as claimed.
\end{proof}

\begin{lemma}[Extension across the two punctures]
\label{lem:rational-descent}
Suppose that a meromorphic function $S$ on $\C^*$ is algebraic over
$\C(u)$. Then $S$ extends meromorphically to $\PP^1$ and is rational.
\end{lemma}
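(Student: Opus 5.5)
The proof is local at each of the two punctures; by symmetry under $u\mapsto1/u$ it suffices to treat $u=0$. Choose a nonzero polynomial relation
\[
 P(u,Y)=\sum_{j=0}^n a_j(u)Y^j,\qquad a_j\in\C[u],\quad a_n\ne0,
\]
with $P(u,S(u))\equiv0$ on $\C^*$. The plan is to show that $S$ has at worst a pole at $0$, using a Cauchy-type bound for roots of polynomials.

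The key estimate is the following. For $u$ near $0$ with $a_n(u)\ne0$, every root $Y$ of $P(u,Y)$ satisfies
\[
 |Y|\le 1+\max_{j<n}|a_j(u)/a_n(u)|.
\]
Each $a_j$ is a polynomial, and $a_n$ vanishes at $0$ to some finite order $N$. Hence on a small punctured disc $0<|u|<\rho$ we obtain
\[
 |S(u)|\le C|u|^{-N}.
\]
Two points need care here:
\begin{enumerate}
\item The bound holds only away from the isolated zeros of $a_n$ in the disc. Shrinking $\rho$ makes $0$ the only zero of $a_n$ there, so the bound holds on the whole punctured disc.
\item $S$ is a priori meromorphic on $\C^*$, so it might have infinitely many poles accumulating at $0$. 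The bound rules this out: it shows that $S$ is actually holomorphic on $0<|u|<\rho$, since a pole would make $|S|$ unbounded near a point where the right-hand side is finite.
\end{enumerate}

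Consequently $u^NS(u)$ is bounded and holomorphic on the punctured disc. Riemann's removable singularity theorem then shows that $S$ has at most a pole of order $N$ at $0$.

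At $\infty$, set $\tilde S(v)=S(1/v)$. The function $\tilde S$ satisfies the relation obtained from $P(1/v,Y)$ after clearing denominators by a power of $v$, which is again a nonzero polynomial relation over $\C[v]$. The same argument gives meromorphy at $v=0$. Thus $S$ is meromorphic on the compact sphere $\PP^1$, and a meromorphic function on $\PP^1$ is rational (by the standard fact, or by Liouville after dividing out the finitely many poles).

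The only real obstacle is the second point above, namely the possible accumulation of poles of $S$ at a puncture. The uniform Cauchy bound for roots handles it directly, so no use of Picard or Casorati–Weierstrass is needed.
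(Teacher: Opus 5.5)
Your proof is correct and follows essentially the same approach as the paper: the Cauchy root bound $|S(u)|\le 1+\max_{j<n}|a_j(u)/a_n(u)|\le C|u|^{-N}$ on a punctured disc where $a_n$ has no zeros, then the substitution $u=1/v$ at infinity. The only minor difference is that you rule out poles near the puncture using the bound itself, whereas the paper uses the fact that the leading term cannot cancel at a pole; either argument works.
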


\begin{proof}
After clearing denominators, write
\[
 a_n(u)S(u)^n+a_{n-1}(u)S(u)^{n-1}+\cdots+a_0(u)=0,
 \qquad a_j\in\C[u],\quad a_n\ne0.
\]
On a sufficiently small punctured disc about zero, $a_n$ has no
zeros. The equation first implies that $S$ has no poles there: at
a pole the highest power, multiplied by a nonvanishing coefficient,
could not cancel. The elementary polynomial root bound then gives
\[
 |S(u)|\leq 1+\max_{j<n}\left|\frac{a_j(u)}{a_n(u)}\right|
 \leq C|u|^{-N}
\]
for suitable $C,N$. Thus zero is at most a pole of $S$. Applying the
same argument with $u=1/v$ treats infinity. Meromorphic functions on
the projective line are rational.
\end{proof}

\begin{proof}[Proof of Theorem~\ref{thm:primitive-intro}]
Let $q$ be the least positive integer supplied by
Lemma~\ref{lem:finite-period}. The exponential map
$z\mapsto u=e^{z/q}$ identifies $\C/(2\pi i q\Z)$ with $\C^*$;
see, for example, Forster~\cite[Section~5]{Forster1981} for the
covering-space description of the exponential map.
Consequently $f$ descends to a meromorphic function $S$ on $\C^*$.
If $P(e^z,f)=0$, then
\[
 P(u^q,S(u))=0.
\]
Lemma~\ref{lem:rational-descent} gives $S\in\C(u)$.

Put $E=\C(e^{z/q})$ and $F=\C(e^z)$. These fields form a cyclic
Galois extension of degree $q$; equivalently, this is the Kummer
extension $\C(u)/\C(u^q)$ in the standard function-field setting
\cite[Chapter~III]{Stichtenoth2009}. In the rational coordinate $u$, its
generator is
\[
 \sigma(u)=\zeta_q u,\qquad \zeta_q=e^{2\pi i/q}.
\]
The automorphism $\sigma^j$ fixes $f=S(u)$ exactly when
$f(z+2\pi i j)=f(z)$. Minimality of $q$ shows that the stabilizer
of $f$ is trivial. By Galois correspondence, $F(f)=E$. Hence
$m=[F(f):F]=q$, proving both assertions.
\end{proof}

\begin{corollary}
\label{cor:algebraic-field}
The relative algebraic closure of $\C(e^z)$ inside $\MM(\C)$ is
exactly $\Erad$.
\end{corollary}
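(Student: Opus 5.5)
The corollary asserts two inclusions, and I will prove each from results already in hand.

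For the inclusion of the algebraic closure in $\Erad$, let $f\in\MM(\C)$ be algebraic over $\C(e^z)$. Theorem~\ref{thm:primitive-intro} gives $f=S(e^{z/m})$ with $S\in\C(u)$ rational and $m=[\C(e^z,f):\C(e^z)]$. Hence $f\in\C(e^{z/m})\subset\Erad$.

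For the reverse inclusion, take $g\in\Erad$. By definition $g\in\C(e^{z/q})$ for some $q\in\N$. The function $e^{z/q}$ is a root of the polynomial $Y^q-e^z$, whose coefficients lie in $\C(e^z)$. So $\C(e^{z/q})$ is a finite extension of $\C(e^z)$, and every element of it, $g$ in particular, is algebraic over $\C(e^z)$. The earlier remark that $\Erad$ is a field, obtained as a union over common multiples, confirms that $\Erad$ is a subfield of $\MM(\C)$.

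Combining the two inclusions, the set of elements of $\MM(\C)$ that are algebraic over $\C(e^z)$ is exactly $\Erad$. This set is the relative algebraic closure. For the same reason, $\Erad$ is relatively algebraically closed in $\MM(\C)$: anything algebraic over $\Erad$ is algebraic over some $\C(e^{z/q})$, hence over $\C(e^z)$ by transitivity of finite extensions, and so already lies in $\Erad$.

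There is no real obstacle here. The only substantive input is the descent in Theorem~\ref{thm:primitive-intro}, namely Lemma~\ref{lem:finite-period} together with Lemma~\ref{lem:rational-descent}, and both are already established.
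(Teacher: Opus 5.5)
Your proof is correct and is the paper's argument: $e^{z/q}$ is a root of $Y^q-e^z$, which gives one inclusion, and Theorem~\ref{thm:primitive-intro} gives the other. Your extra remark that $\Erad$ is itself relatively algebraically closed in $\MM(\C)$ is also correct, although the corollary does not need it.
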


\begin{proof}
Each $e^{z/q}$ is algebraic over $\C(e^z)$. The converse is
Theorem~\ref{thm:primitive-intro}.
\end{proof}

The minimal polynomial can also be read from the primitive
representation. If $f=S(e^{z/m})$, then
\begin{equation}
 \prod_{j=0}^{m-1}\bigl(Y-S(\zeta_m^j u)\bigr)
 \in\C(u^m)[Y]
 \label{eq:orbit-polynomial}
\end{equation}
is its monic minimal polynomial over $\C(e^z)$ after $u^m=e^z$.
All factors in the product are distinct. This describes the
algebraic degree without making any prior assumption that every
branch of an arbitrary plane equation is unramified over $\C^*$.

\begin{proof}[Proof of Theorem~\ref{thm:main}]
Corollary~\ref{cor:algebraicity} proves algebraicity over $\C(e^z)$.
Theorem~\ref{thm:primitive-intro} then gives the required expression.
Conversely, if $f$ is transcendental over $\C(e^z)$, the argument
leading to Corollary~\ref{cor:algebraicity} forces Riccati type.
\end{proof}

There is also a converse at the level of the class of functions,
where the equation is allowed to depend on the function.

\begin{proposition}
\label{prop:converse}
Every member of $\Erad$ satisfies at least one equation
$f'=R(e^z,f)$ with $R\in\C(t,w)$ not of Riccati type. Thus
$\Erad$ is exactly the union of the global meromorphic solution
sets of all such non-Riccati equations.
\end{proposition}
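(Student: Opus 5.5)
Given $f\in\Erad$, we write $f=S(u)$ with $u=e^{z/q}$ and $S\in\C(u)$, and produce an explicit non-Riccati equation for it. The second sentence of the statement then follows at once: Theorem~\ref{thm:main} shows that every global meromorphic solution of a non-Riccati equation lies in $\Erad$, and the first sentence gives the reverse inclusion.

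For the construction we may enlarge $q$ freely, since $e^{z/q}=(e^{z/qk})^k$. We first treat $f$ nonconstant. Consider the field $\C(e^z,f)\subseteq\C(u)$. By Theorem~\ref{thm:primitive-intro} it equals $\C(e^{z/m})$, with $m$ the algebraic degree of $f$ over $\C(e^z)$. Rename $q=m$, so that $u\in\C(e^z,f)$, i.e.\ $u=G(e^z,f)$ with $G\in\C(t,w)$. Since $u'=u/q$, we get
\[
 f'=\frac{u}{q}S'(u)=\frac1q\,G(e^z,f)\,S'\bigl(G(e^z,f)\bigr)=R_0(e^z,f),\qquad R_0\in\C(t,w).
\]
Thus $f$ satisfies some first-order equation with rational coefficients in $e^z$, but $R_0$ might be of Riccati type. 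Indeed $m=1$ and $S(u)=u$ gives $f=e^z$, and $f'=f$ is linear.

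The main step is to destroy Riccati type while keeping $f$ a solution. Because $f$ is algebraic over $\C(e^z)$, there is a nonzero $P\in\C[t,w]$, irreducible, with $P(e^z,f)=0$. We may choose $P$ to involve $w$ nontrivially: if $f$ is nonconstant then $P=\min$ polynomial of $f$ over $\C(e^z)$ cleared of denominators has positive $w$-degree. Set
\[
 R=R_0+P(t,w)^N\cdot w^3
\]
for a large integer $N\ge1$ (or simply $R=R_0+P\,w^{k}$ with $k$ large). Then $R(e^z,f)=R_0(e^z,f)$, so $f$ still solves the equation. For $N$ or $k$ large, the $w$-degree of the added polynomial term exceeds any polynomial part of $R_0$. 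Hence $R$, viewed in $\C(t)(w)$, either has a pole in $w$ inherited from $R_0$ or is a polynomial in $w$ of degree $\ge3$. The point to check is that the degree-raising term cannot cancel. Writing $R_0=A/B$ with $A,B\in\C(t)[w]$, we have $R=(A+BPw^k)/B$. If $B$ has positive $w$-degree, $R$ is not a polynomial in $w$ unless $B$ divides $A$, which does not happen in reduced form. If $B$ is constant in $w$, then $\deg_w R\ge k+\deg_wP-\deg_w A>2$ for $k$ large. In both cases $R$ is not of Riccati type.

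Finally, if $f$ is constant, $f\equiv c$, take $R=(w-c)^3$, which is non-Riccati and has $f$ as a solution. Alternatively, the uniform construction above works with $P=w-c$.
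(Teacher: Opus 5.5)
Your proof is correct. Its decisive step coincides with the paper's: add to an equation already satisfied by $f$ a term $P(t,w)w^{k}$, where $P(e^z,f)=0$. This keeps $f$ a solution and creates a high-order pole at $w=\infty$. The second sentence is then deduced from Theorem~\ref{thm:main} exactly as you do.

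The routes differ in how the first equation is obtained and in the final check. The paper differentiates the monic minimal polynomial and gets $f'=-e^zP_t(e^z,f)/P_w(e^z,f)$ from separability alone. Monicity makes $tP_t/P_w$ bounded at $w=\infty$, so the single choice $Pw^3$ gives a pole of exact order $n+3\ge4$. This works uniformly in $f$, with no separate constant case.

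You obtain $R_0$ from the field equality $\C(e^z,f)=\C(e^{z/m})$ of Theorem~\ref{thm:primitive-intro} and the chain rule. Because you then know nothing about the shape of $R_0$, you need the case distinction on the reduced $w$-denominator and a large exponent $k$. In exchange, your final step shows that \emph{any} rational equation satisfied by $f$ can be converted into a non-Riccati one. The cost is a heavier input than necessary, since $f'\in\C(e^z,f)$ already follows from \eqref{eq:algebraic-derivative}.

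One point you leave implicit, which the paper secures via $P_w(e^z,f)\not\equiv0$: since $R$ is interpreted after cancellation, you must check that the reduced denominator of $R_0$ does not vanish identically along $(e^z,f)$. This does hold. Choose $G\in\C(t)[w]$ and write $S'=N/M$ in lowest terms. This gives an unreduced representation whose denominator evaluates to a nonzero multiple of $M(e^{z/m})\not\equiv0$, and cancelling common factors cannot destroy this.
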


\begin{proof}
Let $P(t,w)\in\C(t)[w]$ be the monic minimal polynomial of $f$
over $\C(e^z)$, with $t$ replacing $e^z$. Separability gives
$P_w(e^z,f)\not\equiv0$. Differentiating the algebraic relation
gives
\[
 f'=-\frac{e^z P_t(e^z,f)}{P_w(e^z,f)}.
\]
Here $P_t$ denotes the ordinary derivative of the rational
coefficients with respect to $t$. Therefore the rational function
\begin{equation}
 R(t,w)=-\frac{tP_t(t,w)}{P_w(t,w)}+P(t,w)w^3
 \label{eq:converse-equation}
\end{equation}
has the required solution. If $n=\deg_w P$, monicity implies
$\deg_w P_t\leq n-1$ and $\deg_w P_w=n-1$. The first term in
\eqref{eq:converse-equation} is bounded at $w=\infty$, while the
second has leading term $w^{n+3}$. Thus the reduced $R$ has a pole
of order $n+3\geq4$ at $w=\infty$ and cannot be of Riccati type.
\end{proof}

\begin{corollary}
\label{cor:rescaled}
Let $\alpha\in\C^*$ and $\beta\in\C$. If
$f'=R(e^{\alpha z+\beta},f)$, where $R\in\C(t,w)$ is not of
Riccati type, then $f(z)=S(e^{\alpha z/q})$ for some rational $S$
and some $q\in\N$.
\end{corollary}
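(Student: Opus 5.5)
The plan is to reduce Corollary~\ref{cor:rescaled} to Theorem~\ref{thm:main} by an affine change of the independent variable. Put $\zeta=\alpha z+\beta$ and define $g(\zeta)=f\bigl((\zeta-\beta)/\alpha\bigr)$. Since $\alpha\ne0$, this is a biholomorphism of $\C$, so $g$ is meromorphic on the whole plane exactly when $f$ is. The chain rule gives
\[
 g'(\zeta)=\frac1\alpha f'\bigl((\zeta-\beta)/\alpha\bigr)
 =\frac1\alpha R(e^\zeta,g(\zeta)).
\]
Hence $g$ solves \eqref{eq:main-equation} with $R$ replaced by $\tilde R=\alpha^{-1}R\in\C(t,w)$.

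Next I check that $\tilde R$ is still not of Riccati type. Multiplication by the nonzero constant $\alpha^{-1}$ preserves both being a polynomial in $w$ and its degree. If $\tilde R$ had the form \eqref{eq:riccati-type}, so would $R=\alpha\tilde R$, which is excluded. Theorem~\ref{thm:main} then yields $q\in\N$ and $S_1\in\C(u)$ with $g(\zeta)=S_1(e^{\zeta/q})$.

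Finally I transport the result back to $z$:
\[
 f(z)=g(\alpha z+\beta)=S_1\bigl(e^{\beta/q}e^{\alpha z/q}\bigr).
\]
Setting $S(u)=S_1(e^{\beta/q}u)$, which is again rational because $e^{\beta/q}$ is a fixed nonzero constant, gives $f(z)=S(e^{\alpha z/q})$.

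The argument has no real obstacle. The only point needing care is that the constant $\beta$ must be absorbed into $S$ rather than into the exponent, and this is legitimate because $e^{\beta/q}$ is a nonzero constant.
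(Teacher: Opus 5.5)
Your proof is correct and is essentially the paper's argument: an affine change of the independent variable that reduces the statement to Theorem~\ref{thm:main}. The only difference is bookkeeping. The paper uses $\zeta=\alpha z$ and absorbs $e^\beta$ into the coefficient, applying the theorem to $\alpha^{-1}R(e^\beta t,w)$. You instead put $\beta$ into the translation and absorb $e^{\beta/q}$ into $S$ at the end, which makes the non-Riccati check slightly more immediate.
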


\begin{proof}
Set $\zeta=\alpha z$ and $F(\zeta)=f(\zeta/\alpha)$.
Then $F'=\alpha^{-1}R(e^\beta e^\zeta,F)$. The new rational
function remains non-Riccati, so Theorem~\ref{thm:main} applies.
\end{proof}

\subsection{The normalized algebraic curve}

The primitive representation determines more than the function itself.
It also identifies the finite curve over the coefficient line, including
its ramification and its automorphism group.

\begin{corollary}[Cyclic normalization]
\label{cor:cyclic-normalization}
Let $f$ be algebraic over $\C(e^z)$ of degree $m$. Let $C_f$ be
the smooth projective curve with function field $\C(e^z,f)$,
viewed as a finite extension of $\C(t)$ by $t=e^z$. There is a
coordinate $u$ on $C_f\simeq\PP^1$ for which
\[
 t=u^m.
\]
The resulting cover is cyclic of degree $m$, is unramified over
$\C^*$, and has ramification divisor
$(m-1)([0]+[\infty])$. In particular, every algebraic conjugate of
$f$ over $\C(e^z)$ is again a global meromorphic function on $\C$.
\end{corollary}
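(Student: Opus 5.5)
The plan is to derive everything from Theorem~\ref{thm:primitive-intro}. Put $m=[\C(e^z,f):\C(e^z)]$. By \eqref{eq:primitive-field} we have $\C(e^z,f)=\C(e^{z/m})$, so the function field of $C_f$ is $\C(u)$ with $u=e^{z/m}$. This exhibits $C_f$ as $\PP^1$ with coordinate $u$. Moreover $t=e^z=u^m$ holds in this field, since $\iota$-type injectivity is automatic here: the identity is an identity of meromorphic functions, and $u$ is transcendental. Hence the extension $\C(u)/\C(t)$ is exactly the Kummer extension $\C(u)/\C(u^m)$ already used in the proof of Theorem~\ref{thm:primitive-intro}. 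It is cyclic of degree $m$, with generator $\sigma(u)=\zeta_m u$.

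Next comes the ramification. The map $u\mapsto u^m$ has derivative $mu^{m-1}$, which vanishes on $\C^*$ nowhere, so the cover is unramified over $\C^*$. Over $0$ and over $\infty$ there is a single point in each fibre, each with index $m$. The ramification divisor is therefore $(m-1)([0]+[\infty])$. As a consistency check, Riemann--Hurwitz gives $-2=-2m+2(m-1)$, matching \eqref{eq:rh-two-point} with $r_0=r_\infty=1$.

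For the conjugates, write $f=S(u)$ with $S$ rational, as in Theorem~\ref{thm:primitive-intro}. The Galois group of the cyclic extension consists of the substitutions $u\mapsto\zeta_m^ju$. These correspond to the translations $z\mapsto z+2\pi i j$, because $e^{(z+2\pi i j)/m}=\zeta_m^j e^{z/m}$. So the conjugates of $f$ over $\C(e^z)$ are the roots of \eqref{eq:orbit-polynomial}, namely $S(\zeta_m^j e^{z/m})=f(z+2\pi i j)$. Each of these is visibly a global meromorphic function. The only point needing care is that these conjugates exhaust all roots of the minimal polynomial. That follows because the extension is Galois (normal), so the minimal polynomial splits in $\C(u)$ with roots forming the $\sigma$-orbit of $f$. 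These roots are distinct because $f$ has trivial stabilizer, by the minimality of $m$ in \eqref{eq:primitive-period}.

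I expect no step to be a real obstacle. The mildest subtlety is the identification of the abstract smooth projective curve $C_f$ with $\PP^1_u$, which holds because smooth projective curves are determined by their function fields.
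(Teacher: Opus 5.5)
Your proposal is correct and follows the paper's own proof essentially step for step. The field equality \eqref{eq:primitive-field} identifies $C_f$ with $\PP^1_u$, $u=e^{z/m}$, $t=u^m$; the derivative $mu^{m-1}$ and the single points over $0,\infty$ give the ramification; and the conjugates are the translates $f(z+2\pi i j)$, with your Riemann--Hurwitz check and normality argument only spelling out what the paper leaves implicit.
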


\begin{proof}
The field equality in Theorem~\ref{thm:primitive-intro} identifies
the coordinate $u$ with $e^{z/m}$ and gives $t=u^m$. Smooth
projective curves are determined by their function fields. The
ramification indices at zero and infinity are both $m$, and the
derivative $m u^{m-1}$ is nonzero on $\C^*$. Multiplication of $u$
by an $m$th root of unity gives the cyclic Galois group. The
conjugates of $f=S(u)$ are the functions
$S(\zeta_m^j e^{z/m})=f(z+2\pi i j)$, which are meromorphic.
\end{proof}

The use of normalization in this corollary is essential when a
solution is presented by an implicit polynomial. A plane equation
$P(t,w)=0$ can have singular points over $t_0\ne0$. The vanishing
of $P_w$ at such a point does not imply ramification of its
normalization over the $t$-line. Distinct analytic branches can
intersect in the plane while remaining distinct and unramified on
the normalized curve.

This distinction also clarifies the quotient $-tP_t/P_w$ in
\eqref{eq:converse-equation}. Its apparent singularities along a
particular trajectory may cancel after restriction to the
normalization. An ambient pole of the rational vector field and
a pole of the time-parametrized solution are different objects.
Neither the algebraicity proof nor the descent argument assumes
that the ambient plane equation is nonsingular.

The corollary can alternatively be interpreted in terms of the
fundamental group of $\C^*$. Its connected finite covers correspond
to finite-index subgroups of $\Z$, and are represented by
$u\mapsto u^m$~\cite[Sections~4--5]{Forster1981}. In the present proof
this covering description is a
consequence of single-valued algebraic descent. It was not assumed
before the integer period had been proved.

\section{Growth and value fibres}
\label{sec:growth}

We use the standard characteristic $T(r,f)=m(r,f)+N(r,f)$, with
$N(r,f)$ counting poles with multiplicities and with the usual
convention at the origin. General background on these quantities can
be found in~\cite{Hayman1964,Laine1993}. In the present situation the
growth formula follows from a short direct calculation.

\begin{proposition}
\label{prop:growth}
Let $q\in\N$ and let $S\in\C(u)$ be nonconstant of degree $n$.
Then
\begin{equation}
 T(r,S(e^{z/q}))=\frac{n}{\pi q}r+O(1)
 \qquad(r\to\infty).
 \label{eq:exact-growth}
\end{equation}
\end{proposition}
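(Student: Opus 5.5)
The plan is to compute $N(r,f)$ and $m(r,f)$ separately for $f=S(e^{z/q})$, using $u=e^{z/q}$ and the exact geometry of the disc $|z|<r$. First we reduce to a convenient form. Write $S=P/Q$ in lowest terms with $\deg S=n=\max(\deg P,\deg Q)$. Since $T(r,1/(f-a))=T(r,f)+O(1)$ by the first main theorem, and a Möbius change of $S$ preserves $n$, we may assume $S(\infty)=\infty$. We may also assume $S(0)\ne\infty$ and that all poles of $S$ lie in $\C^*$. Concretely, compose with $1/(S-a)$ for a generic $a$, chosen so that $a\neq S(0),S(\infty)$. Then all $n$ poles $b_1,\dots,b_n$ of the new $S$ (with multiplicity) lie in $\C^*$, and $S$ is bounded near $u=0$ and near $u=\infty$.

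\emph{Counting poles.} The poles of $f$ are the solutions of $e^{z/q}=b_j$, namely $z=q\log b_j+2\pi i q k$ with $k\in\Z$, each with the multiplicity of $b_j$. For a fixed $b_j$, this set is an arithmetic progression on a vertical line with spacing $2\pi q$. The number of its points in $|z|\le t$ is therefore $t/(\pi q)+O(1)$. Summing over the $n$ poles gives $n(t,f)=nt/(\pi q)+O(1)$. Integrating $(n(t,f)-n(0,f))/t$ then gives
\[
 N(r,f)=\frac{n}{\pi q}r+O(\log r).
\]
The $O(\log r)$ coming from the $O(1)$ error is the point needing care. I would sharpen it by pairing $k$ with $-k$ so that the count is $2\lfloor\cdot\rfloor+\epsilon$. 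This gives a bounded oscillating error whose contribution to $\int (\cdot)\,dt/t$ need not be $O(1)$ a priori.

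\emph{Proximity.} The function $f$ is large only where $e^{z/q}$ is close to some $b_j$. On $|z|=r$, the quantity $e^{z/q}$ approaches $0$ or $\infty$ except on short arcs near $\theta=\pm\pi/2$. There $\log^+|f|$ is bounded by $\sum_j\log^+\bigl(1/|e^{z/q}-b_j|\bigr)+O(1)$. Near $\theta=\pi/2$, set $x=\operatorname{Re} z=r\cos\theta$, so that $d\theta\approx dx/r$. The integral of $\log^+(1/|e^{z/q}-b_j|)$ over that arc is then at most $r^{-1}$ times a locally integrable logarithmic singularity in $x$, hence $O(1/r)$. Thus $m(r,f)=O(1)$.

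The main obstacle is obtaining a genuine $O(1)$ rather than $O(\log r)$ in $N(r,f)$. I would handle it exactly using Jensen's formula. For this, write
\[
 f=\frac{\prod_j(u-a_j)}{\prod_j(u-b_j)}\cdot\text{const}\cdot u^{k}.
\]
Apply Jensen's formula to each entire factor $e^{z/q}-b_j$. Its $\log|\cdot|$ on $|z|=r$ equals $\max(r\cos\theta/q,\log|b_j|)$ up to the small arc errors above. The mean of this is $r/(\pi q)+O(1)$, because the mean of $\max(\cos\theta,0)$ is $1/\pi$. Jensen then yields $N(r,1/(e^{z/q}-b_j))=r/(\pi q)+O(1)$ exactly. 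Summing over $j$ gives the formula.
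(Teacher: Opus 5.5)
Your proof is correct once the stray sentence ``we may assume $S(\infty)=\infty$'' is deleted. It contradicts the normalization you actually use: $S$ is replaced by $1/(S-a)$ with $a\neq S(0),S(\infty)$, so all $n$ poles $b_j$ lie in $\C^*$ and $S$ is bounded near $0$ and $\infty$. The Jensen computation in your last paragraph then serves as the pole count; the lattice-point count alone gives only $O(\log r)$, as you note.

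Your route splits $T=m+N$. It must then control the logarithmic singularities of $\log|e^{z/q}-b_j|$ on the two short arcs near $\theta=\pm\pi/2$. The bound $|e^{z/q}-b_j|\geq|e^{x/q}-|b_j||$ with $x=r\cos\theta$, together with $d\theta\approx dx/r$, handles this correctly and gives an $O(1/r)$ contribution.

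The paper avoids all of this. It writes $S=P/Q$ in lowest terms and applies Jensen once, to $Q(e^{z/q})$. This gives $T(r,f)=\frac1{2\pi}\int_0^{2\pi}\log\max\{|P(g)|,|Q(g)|\}\,d\theta+O(1)$ with $g=e^{z/q}$. Coprimality then yields $c\max(1,|u|)^n\leq\max(|P(u)|,|Q(u)|)\leq C\max(1,|u|)^n$ on all of $\C$. Hence the integrand equals $n\max\{0,r\cos\theta/q\}+O(1)$ pointwise. There is no M\"obius normalization, no separate proximity estimate, and no singularity on the circle to integrate.

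In exchange, your factor-by-factor Jensen computation gives the finer statement $N\bigl(r,1/(e^{z/q}-b)\bigr)=\frac{r}{\pi q}+O(1)$ for each $b\in\C^*$. That would sharpen the $O(\log r)$ error terms in Theorem~\ref{thm:defects} to $O(1)$.
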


\begin{proof}
Write $S=P/Q$ with relatively prime polynomials, and let
$n=\max(\deg P,\deg Q)$. There are positive constants $c,C$ such that
\begin{equation}
 c\max(1,|u|)^n
 \leq\max(|P(u)|,|Q(u)|)
 \leq C\max(1,|u|)^n
 \label{eq:polynomial-norm}
\end{equation}
for every $u\in\C$. At infinity this follows from the leading
terms, and on a compact set it follows from the absence of common
zeros.

Set $g=e^{z/q}$. The entire functions $P(g)$ and $Q(g)$ have no
common zeros. Jensen's formula for $Q(g)$, together with the
definition of $m(r,P(g)/Q(g))$, gives
\[
 T(r,S(g))=
 \frac1{2\pi}\int_0^{2\pi}
 \log\max\{|P(g(re^{i\theta}))|,|Q(g(re^{i\theta}))|\}
 \dd\theta+O(1).
\]
If $Q(g)$ vanishes at the origin, its first nonzero Taylor
coefficient is used in Jensen's formula; this only changes the
constant. By \eqref{eq:polynomial-norm}, the last expression is
\[
 \frac{n}{2\pi}\int_0^{2\pi}
 \max\{0,(r/q)\cos\theta\}\dd\theta+O(1)
 =\frac{n}{\pi q}r+O(1).
\]
\end{proof}

\begin{corollary}
\label{cor:order}
Every nonconstant solution in Theorem~\ref{thm:main} has order exactly
one and finite positive type. Moreover,
\[
 \pi\lim_{r\to\infty}\frac{T(r,f)}r\in\Q_{>0}.
\]
\end{corollary}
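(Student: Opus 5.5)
By Theorem~\ref{thm:main}, every meromorphic solution $f$ of \eqref{eq:main-equation} with $R$ not of Riccati type can be written $f=S(e^{z/q})$ with $S\in\C(u)$ and $q\in\N$. Theorem~\ref{thm:primitive-intro} lets us take the primitive representation, with $q=m=[\C(e^z,f):\C(e^z)]$. If $f$ is nonconstant, then $S$ is nonconstant; write $n=\deg S\ge1$. Proposition~\ref{prop:growth} then gives
\[
 T(r,f)=\frac{n}{\pi q}\,r+O(1)\qquad(r\to\infty).
\]
The corollary follows from this asymptotic formula.

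First, the order. Dividing by $r$ gives $T(r,f)/r\to n/(\pi q)$, which is finite and positive. Hence
\[
 \log T(r,f)=\log r+O(1),
\]
and $\rho(f)=\limsup_{r\to\infty}\log T(r,f)/\log r=1$. The type is $\limsup_{r\to\infty}T(r,f)/r=n/(\pi q)$, which lies in $(0,\infty)$.

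Second, the limit. Since
\[
 \pi\lim_{r\to\infty}\frac{T(r,f)}{r}=\frac{n}{q},
\]
and $n,q$ are positive integers, this value lies in $\Q_{>0}$.

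No step is a real obstacle. The only point to check is that nonconstancy of $f$ forces $S$ to be nonconstant, so that $n\ge1$ and Proposition~\ref{prop:growth} applies. This is immediate, because a constant $S$ would make $f$ constant. The rationality of $\pi\lim T(r,f)/r$ does not depend on choosing the primitive representation; any representation $f=S(e^{z/q})$ gives a ratio $n/q$ of positive integers, and these ratios all agree.
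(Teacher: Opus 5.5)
Your proposal is correct and matches the paper's route. You read the corollary directly off Theorem~\ref{thm:main} and the asymptotic formula $T(r,f)=\frac{\deg S}{\pi q}r+O(1)$ of Proposition~\ref{prop:growth}, and your observation that $n/q$ does not depend on the chosen representation is the same as the paper's remark following the corollary.
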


The rational number in this formula is independent of whether the
chosen representation is primitive. Replacing $u$ by an integer
power increases the degree of the rational function and the
denominator of the exponent by the same factor.

\begin{corollary}
\label{cor:entire}
Every entire solution of a non-Riccati equation
\eqref{eq:main-equation} is a Laurent polynomial in a finite root
of $e^z$. Thus it can be written as
\begin{equation}
 f(z)=\sum_{j=-a}^{b}c_j e^{jz/q},
 \qquad a,b\geq0,\quad q\geq1,
 \label{eq:laurent-solution}
\end{equation}
with constant coefficients $c_j$.
\end{corollary}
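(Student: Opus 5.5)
By Theorem~\ref{thm:main}, an entire solution $f$ of a non-Riccati equation \eqref{eq:main-equation} can be written as $f(z)=S(e^{z/q})$ with $S\in\C(u)$ and $q\in\N$. Theorem~\ref{thm:primitive-intro} lets us take $q=m$ primitive, although primitivity is not needed here. The plan is to show that entireness of $f$ forces $S$ to have no poles in $\C^*$. A rational function whose poles lie in $\{0,\infty\}$ is a Laurent polynomial, and that is exactly \eqref{eq:laurent-solution}.

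The key step is the pole argument. Suppose $S$ had a pole at some $u_0\in\C^*$. The map $z\mapsto e^{z/q}$ is surjective onto $\C^*$, so we can choose $z_0$ with $e^{z_0/q}=u_0$. Near $z_0$ this map is a local biholomorphism, since its derivative $q^{-1}e^{z_0/q}$ is nonzero. Hence $S(e^{z/q})$ has a pole at $z_0$ of the same order as the pole of $S$ at $u_0$, which contradicts the fact that $f$ is entire. Therefore $S=P(u)/u^{a}$ with $P\in\C[u]$ and $a\ge0$.

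Expanding $P$ gives $S(u)=\sum_{j=-a}^{b}c_ju^j$, with $b=\deg P-a$. If $\deg P<a$, we can pad with zero coefficients so that $b\ge 0$. Substituting $u=e^{z/q}$ yields \eqref{eq:laurent-solution}. Constant entire solutions are covered trivially, with $a=b=0$.

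There is no real obstacle once Theorem~\ref{thm:main} is available. The only point needing care is the local-biholomorphism step: it guarantees that a pole of $S$ away from the punctures genuinely lifts to a pole of $f$, rather than being cancelled. This works because $e^{z/q}$ is unramified and surjective onto $\C^*$.
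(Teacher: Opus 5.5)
Your proof is correct and follows the paper's argument: you write $f=S(e^{z/q})$ by Theorem~\ref{thm:main}, use the fact that $z\mapsto e^{z/q}$ covers $\C^*$ locally biholomorphically to rule out poles of $S$ in $\C^*$, and conclude that $S$ is a Laurent polynomial. Your explicit treatment of the pole order and the zero-padding to get $a,b\geq0$ just spells out details the paper leaves implicit.
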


\begin{proof}
Write $f=S(e^{z/q})$. Since the exponential map covers $\C^*$,
the rational function $S$ cannot have a pole at a nonzero finite
point. A rational function with poles confined to $0$ and infinity
is a Laurent polynomial.
\end{proof}

For a nonconstant meromorphic function $f$ and $a\in\PP^1$, define
its \emph{exponential fibre set} by
\begin{equation}
 E_a(f)=\{e^z:z\in\C,\ f(z)=a\}.
 \label{eq:exponential-fibre}
\end{equation}
For $a=\infty$, the preimage means the set of poles.

\begin{proposition}
\label{prop:fibres}
If $f=S(e^{z/q})$ with $S$ nonconstant and rational, then every
$E_a(f)$ is finite. More precisely,
\begin{equation}
 E_a(f)=\{\alpha^q:\alpha\in\C^*,\ S(\alpha)=a\},
 \label{eq:fibre-formula}
\end{equation}
and
\begin{equation}
 f^{-1}(a)=
 \bigcup_{\substack{\alpha\in\C^*\\S(\alpha)=a}}
 \bigl(q\log\alpha+2\pi iq\Z\bigr).
 \label{eq:lattice-fibres}
\end{equation}
Any logarithm may be used in each coset. The multiplicity is constant
on each coset and equals the local multiplicity of $S$ at $\alpha$.
\end{proposition}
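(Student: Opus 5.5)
The argument works directly with the factorisation $f=S\circ g$, where $g(z)=e^{z/q}$. It splits into three steps: describe the fibre $f^{-1}(a)$ in terms of $S^{-1}(a)$, read off the exponential fibre set, and compute multiplicities.

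\emph{Fibres.} Since $g$ maps $\C$ onto $\C^*$, we have
\[
 f(z)=a \iff S(\alpha)=a \ \text{ with } \alpha=g(z)\in\C^*.
\]
For $a=\infty$ the same equivalence holds with poles in place of values; poles of $f$ are poles of $S$ evaluated at nonzero points, because $g$ is holomorphic and nonvanishing. The preimage $g^{-1}(\alpha)$ is exactly $q\log\alpha+2\pi i q\Z$, and any choice of logarithm gives the same coset. Taking the union over the roots $\alpha\in\C^*$ of $S=a$ gives \eqref{eq:lattice-fibres}. Because $S$ is nonconstant and rational, there are finitely many such $\alpha$, at most $\deg S$. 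Points of $S^{-1}(a)$ at $0$ or $\infty$ are simply never attained, which is why the union is restricted to $\alpha\in\C^*$.

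\emph{Exponential fibre set.} For $z\in q\log\alpha+2\pi iq\Z$ we have $e^z=(e^{z/q})^q=\alpha^q$. This yields \eqref{eq:fibre-formula}, and finiteness of $E_a(f)$ follows from finiteness of the set of $\alpha$.

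\emph{Multiplicities.} Let $z_0$ be a point with $g(z_0)=\alpha$. Then $g'(z_0)=\alpha/q\neq0$, so $g$ is a local biholomorphism near $z_0$. Write $S(u)-a=(u-\alpha)^k v(u)$ with $v(\alpha)\ne0$; for $a=\infty$, apply the same reasoning to $1/S$. Then
\[
 f(z)-a=(g(z)-\alpha)^k\, v(g(z)),
\]
and $g(z)-\alpha$ vanishes to order exactly one at $z_0$. Hence the multiplicity of $f$ at $z_0$ is $k$, the local multiplicity of $S$ at $\alpha$. Since $k$ does not depend on the choice of $z_0$ in the coset, the multiplicity is constant on each coset.

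None of these steps presents a real obstacle. The only points needing care are the convention at $a=\infty$ and excluding $\alpha=0,\infty$ from the parametrisation.
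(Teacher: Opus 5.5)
Your proposal is correct and follows the paper's own proof essentially step for step. Both reduce $S(e^{z/q})=a$ to $e^{z/q}=\alpha$ for the finitely many $\alpha\in\C^*$ with $S(\alpha)=a$, solve for the cosets, exponentiate to obtain $E_a(f)$, and use the nonvanishing derivative of $e^{z/q}$ to transfer local multiplicities; you additionally spell out the $a=\infty$ convention and the local factorization $S(u)-a=(u-\alpha)^k v(u)$, which the paper leaves implicit.
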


\begin{proof}
The equation $S(e^{z/q})=a$ is equivalent to
$e^{z/q}=\alpha$ for one of the finitely many nonzero finite
preimages $\alpha$ of $a$ under $S$. Solving this exponential
equation gives \eqref{eq:lattice-fibres}, and exponentiating $z$
gives \eqref{eq:fibre-formula}. Since $e^{z/q}$ has nonzero
derivative at every finite time, local multiplicities are preserved.
\end{proof}

In particular, $|E_a(f)|\leq\deg S$. This holds for all values,
although the number of distinct exponential images can be smaller
than the number of preimages of $a$ under $S$: different $\alpha$
may have the same $q$th power.

\subsection{What three finite exponential fibres imply}

The preceding conclusion is stronger than a restriction at three
values. It is useful to record exactly what can be deduced from that
weaker condition without a differential equation.

\begin{proposition}
\label{prop:three-fibres}
Suppose that $f$ is nonconstant and meromorphic in $\C$. If
$E_{a_j}(f)$ is finite for three distinct values
$a_1,a_2,a_3\in\PP^1$, then $T(r,f)=O(r)$.
\end{proposition}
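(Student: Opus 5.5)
The plan is to combine the second main theorem of Nevanlinna theory with a direct count of the value fibres. Finiteness of each $E_{a_j}(f)$ should force the counting function of that value to grow at most linearly.

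\emph{Step 1: counting functions.} Fix $j$ and write $E_{a_j}(f)=\{\beta_1,\dots,\beta_k\}\subset\C^*$. Every solution of $f(z)=a_j$ satisfies $e^z=\beta_l$ for some $l$. It therefore lies in one of the $k$ lattice cosets $\log\beta_l+2\pi i\Z$. A disc of radius $r$ meets each coset in at most $r/\pi+1$ points. Hence the number of \emph{distinct} $a_j$-points satisfies
\[
 \overline n(r,a_j)\le k(r/\pi+1)=O(r),
\]
and integration gives $\overline N(r,a_j)=O(r)$. Only distinct points are counted here, because the multiplicities could in principle be unbounded. This is exactly why the truncated counting function is the right tool.

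\emph{Step 2: second main theorem.} I will apply the second main theorem in its truncated form for three values:
\[
 T(r,f)\le\sum_{j=1}^3\overline N(r,a_j)+S(r,f).
\]
Combined with Step 1 this gives $T(r,f)\le Cr+S(r,f)$. The error satisfies $S(r,f)=O(\log T(r,f)+\log r)$ outside an exceptional set $F\subset(0,\infty)$ of finite linear measure. Consequently $T(r,f)\le C'r$ for $r\notin F$.

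\emph{Step 3: removing the exceptional set.} This is the main technical obstacle, though it is standard. The characteristic $T(r,f)$ is nondecreasing in $r$. Since $F$ has finite measure, for each large $r$ there is some $r'\in[r,r+1+|F|]\setminus F$, where $|F|$ is the measure of $F$. Monotonicity then gives
\[
 T(r,f)\le T(r',f)\le C'(r+1+|F|)=O(r).
\]
This holds for all large $r$, which proves $T(r,f)=O(r)$.

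One point needs checking before Step 2: the case where some $a_j=\infty$ or some $E_{a_j}(f)$ is empty. An empty fibre contributes nothing to the counting, so it causes no difficulty. The value $\infty$ is handled by the same lattice argument, applied to poles. If one prefers to avoid this, a Möbius change of $f$ first sends all three values to finite points. This change alters $T(r,f)$ only by $O(1)$.
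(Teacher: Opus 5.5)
Your proposal is correct and follows the paper's proof essentially step for step. The paper also counts the $a_j$-points in finitely many cosets to get $\overline N(r,a_j;f)=O(r)$, applies the three-value truncated second main theorem off a set of finite measure, and removes the exceptional set by monotonicity of $T$; it uses a window $[r,r+1]$ in the tail of that set where you use $[r,r+1+|F|]$, and it makes explicit, via $C\log^+x\le x/2+C_0$, the routine absorption of the $\log^+T(r,f)$ error term that you leave implicit after ``Consequently''.
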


\begin{proof}
For each $j$, the support of the $a_j$-points is contained in a
finite union of progressions $b+2\pi i\Z$. The corresponding
truncated, unintegrated counting function is $O(r)$, and hence
\[
 \overline N(r,a_j;f)=O(r).
\]
The three-value second main theorem gives
\begin{equation}
 T(r,f)\leq Cr+
 C\bigl(\log^+T(r,f)+\log r\bigr)
 \label{eq:smt-bound}
\end{equation}
outside a set of finite linear measure. We use the standard error
term in~\cite{Hayman1964,Laine1993}. For large positive $x$,
$C\log^+x\leq x/2+C_0$. Absorbing this term in
\eqref{eq:smt-bound} proves $T(r,f)=O(r)$ outside the exceptional
set. Its tail has measure less than one; for every sufficiently
large $r$ there is a nonexceptional $s\in[r,r+1]$. Monotonicity of
the characteristic extends the estimate to every large $r$.
\end{proof}

This proposition supplies a growth bound but does not supply a period
or algebraicity. The following example makes the distinction explicit.

\begin{example}[Three fibres without exponential descent]
\label{ex:three-fibres}
The power series
\begin{equation}
 F(z)=\sum_{n=0}^{\infty}\frac{(i\pi z/2)^n}{(2n)!}
     =\cos\sqrt{-i\pi z/2}
 \label{eq:cosine-root}
\end{equation}
defines a single-valued entire function of order $1/2$. It satisfies
\begin{equation}
 E_1(F)=E_{-1}(F)=\{1\},\qquad E_0(F)=\{i\},
 \label{eq:three-special-fibres}
\end{equation}
but $F\notin\Erad$.
\end{example}

\begin{proof}
The square-root notation abbreviates the entire power series; the
cosine is even, so the branch choice has no effect. The estimate
$|F(z)|\leq\exp(C\sqrt{|z|})$ gives order at most $1/2$. On the
ray $z=-ir$, $F(-ir)=\cosh\sqrt{\pi r/2}$, giving the reverse
bound.

If the cosine argument is $s$, then $z=2is^2/\pi$. Its values
$\pm1$ occur for integer multiples of $\pi$, and its zeros for
$s=\pi(n+1/2)$. Exponentiating the corresponding $z$ gives
\eqref{eq:three-special-fibres}. The distinct zeros have counting
function $O(\sqrt r)$. A nonzero period would repeat any one zero
along an arithmetic progression and give at least a positive
multiple of $r$ zeros in large discs, a contradiction. Thus $F$
has no nonzero period and cannot belong to $\Erad$.
\end{proof}

The example shows why the identity $Dt=t$ and the differential
equation remain essential in the proof of
Theorem~\ref{thm:main}. Finiteness of three exponential fibre sets,
even together with finite order, is not a substitute for that input.

\subsection{Sectorial growth of entire solutions}

For an entire solution in \eqref{eq:laurent-solution}, the two
extreme exponents describe its directional growth. We state this
elementary consequence to distinguish its geometric growth data
from the characteristic slope of Proposition~\ref{prop:growth}.

\begin{proposition}[Indicator of an entire solution]
\label{prop:indicator}
Suppose that $f$ is nonconstant and has the representation
\[
 f(z)=\sum_{j=\ell}^{J}c_j e^{jz/q},
 \qquad c_\ell c_J\ne0,
\]
where missing intermediate coefficients are allowed. Its order-one
indicator is
\begin{equation}
 h_f(\theta):=\limsup_{r\to\infty}
       \frac{\log|f(re^{i\theta})|}{r}
 =\max_{c_j\ne0}\frac{j}{q}\cos\theta.
 \label{eq:indicator}
\end{equation}
On a closed angular sector on which $\cos\theta$ is positive,
the $J$th term gives a uniform asymptotic equivalent. On a closed
sector on which $\cos\theta$ is negative, the $\ell$th term does.
\end{proposition}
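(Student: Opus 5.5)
Write $w_j=e^{jz/q}$ and $z=re^{i\theta}$. Then $|w_j|=\exp\bigl((j/q)r\cos\theta\bigr)$, so each term contributes growth rate $(j/q)\cos\theta$. The plan is to prove the indicator formula \eqref{eq:indicator} by a direct two-sided estimate, splitting into the cases $\cos\theta>0$, $\cos\theta<0$ and $\cos\theta=0$. No earlier machinery is needed beyond the representation itself.

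\emph{Upper bound.} The triangle inequality gives $|f(re^{i\theta})|\le\sum_j|c_j|\exp\bigl((j/q)r\cos\theta\bigr)\le C\exp\bigl(r\max_{c_j\ne0}(j/q)\cos\theta\bigr)$. Hence $h_f(\theta)\le\max_{c_j\ne0}(j/q)\cos\theta$ for every $\theta$. Since the expression is linear in $j$, this maximum is attained at $j=J$ when $\cos\theta>0$, at $j=\ell$ when $\cos\theta<0$, and equals $0$ when $\cos\theta=0$.

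\emph{Lower bound and sectorial asymptotics.} Take a closed sector on which $\cos\theta\ge\delta>0$. Factor out the top term:
\[
 f=c_Je^{Jz/q}\Bigl(1+\sum_{j<J}\frac{c_j}{c_J}e^{(j-J)z/q}\Bigr).
\]
Each summand in the bracket is bounded by $|c_j/c_J|\exp(-\delta r/q)$, so the bracket tends to $1$ uniformly in the sector. This gives the uniform equivalent $f\sim c_Je^{Jz/q}$ and $\log|f|=(J/q)r\cos\theta+o(r)$. The sector with $\cos\theta\le-\delta<0$ is handled symmetrically by factoring out $c_\ell e^{\ell z/q}$.

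The main obstacle is the boundary direction $\cos\theta=0$, where the claimed value is $0$. There $f(iy)=\sum_j c_je^{ijy/q}$ is a nonzero trigonometric polynomial in $y$. Indeed, it is nonconstant or a nonzero constant; and if $J=\ell$ then $f$ is a single nonzero term. The upper bound $0$ is immediate. For the limsup $\ge0$, I would use that a nonzero almost-periodic (here periodic, of period $2\pi q$) trigonometric polynomial does not tend to $0$. Parseval over one period gives a mean square equal to $\sum|c_j|^2>0$, so $|f(iy)|\ge\varepsilon$ on a set of $y$ of arbitrarily large size. Along those $r$ we get $\log|f|/r\to0$, so the limsup is $\ge0$. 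Combining the three cases yields \eqref{eq:indicator}.
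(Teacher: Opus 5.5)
Your proof is correct and follows the paper's argument. On sectors with $\cos\theta\ge\delta>0$ (resp.\ $\cos\theta\le-\delta<0$) you factor out $c_Je^{Jz/q}$ (resp.\ $c_\ell e^{\ell z/q}$), and on the imaginary rays you use boundedness for the upper bound and the nonvanishing of the $2\pi q$-periodic restriction $f(iy)$ for the lower bound; your Parseval step is more than is needed, since, as the paper notes, a single nonzero value of this periodic restriction already recurs at arbitrarily large radii.
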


\begin{proof}
If $\cos\theta\geq\varepsilon>0$, factor out
$c_J e^{Jz/q}$. Every remaining exponential has modulus bounded
by $C e^{-\varepsilon r/q}$, so
\[
 f(re^{i\theta})=c_J e^{Jre^{i\theta}/q}
 \bigl(1+O(e^{-\varepsilon r/q})\bigr)
\]
uniformly on the sector. Factoring the lowest exponent proves
the analogous statement for $\cos\theta\leq-\varepsilon$.
If there is only one term, these statements hold as exact
identities.

On either imaginary ray, the function is bounded, so the indicator
is at most zero. Its restriction to that line is periodic and is
not identically zero. A nonzero value therefore repeats at
arbitrarily large radii, making the indicator at least zero.
This proves \eqref{eq:indicator} also on the two boundary rays.
\end{proof}

In this representation the rational degree of the Laurent
polynomial is
\[
 n=\max(J,0)-\min(\ell,0).
\]
Consequently the characteristic slope depends on the interval
spanned by the frequencies together with zero. The indicator
itself can be negative in a half-plane when all exponents have
the same strict sign. These two descriptions are consistent,
since the characteristic uses $\log^+|f|$.

\section{Deficiencies and ramification}
\label{sec:deficiencies}

The rational representation also determines the leading term of
every counting function. In this section let
\[
 f(z)=S(e^{z/q}),\qquad n=\deg S\geq1.
\]
For $a\in\PP^1$, write $N(r,a;f)$ for the integrated counting
function of the $a$-points, with multiplicity, and
$\overline N(r,a;f)$ for its truncated version. Thus
$N(r,\infty;f)=N(r,f)$.

For $p\in\PP^1$, let $e_p(S)$ be the local mapping degree of
$S$ at $p$. Define
\begin{equation}
 \epsilon_p(a)=
 \begin{cases}
 e_p(S),&S(p)=a,\\
 0,&S(p)\ne a,
 \end{cases}
 \qquad p\in\{0,\infty\},
 \label{eq:endpoint-multiplicities}
\end{equation}
and let
\[
 \nu_a=\#\{\alpha\in\C^*:S(\alpha)=a\}.
\]
The multiplicities $\epsilon_0(a)$ and $\epsilon_\infty(a)$
record exactly the preimages lost when the rational curve is
restricted to the exponential image $\C^*$.

\begin{theorem}[Counting functions and defects]
\label{thm:defects}
For each fixed $a\in\PP^1$,
\begin{align}
 N(r,a;f)
 &=\frac{n-\epsilon_0(a)-\epsilon_\infty(a)}{\pi q}
       r+O(\log r),\label{eq:counting-asymptotic}\\
 \overline N(r,a;f)
 &=\frac{\nu_a}{\pi q}r+O(\log r).
 \label{eq:truncated-asymptotic}
\end{align}
Accordingly, the Nevanlinna deficiency and the truncated deficiency,
defined respectively by
\[
 \delta(a,f)=1-\limsup_{r\to\infty}\frac{N(r,a;f)}{T(r,f)},
 \qquad
 \Theta(a,f)=1-\limsup_{r\to\infty}
                 \frac{\overline N(r,a;f)}{T(r,f)},
\]
are given by
\begin{equation}
 \delta(a,f)=\frac{\epsilon_0(a)+\epsilon_\infty(a)}{n},
 \qquad \Theta(a,f)=1-\frac{\nu_a}{n}.
 \label{eq:defect-formulas}
\end{equation}
In particular, the only possible deficient values are $S(0)$ and
$S(\infty)$, and
\begin{equation}
 \sum_{a\in\PP^1}\delta(a,f)
   =\frac{e_0(S)+e_\infty(S)}{n},
 \qquad
 \sum_{a\in\PP^1}\Theta(a,f)=2.
 \label{eq:defect-sums}
\end{equation}
The sums have finite support. If $S(0)=S(\infty)$, their
contributions to $\delta$ occur at the same value.
\end{theorem}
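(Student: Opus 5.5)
The plan is to count $a$-points directly from the explicit fibre description in Proposition~\ref{prop:fibres}, compare with the characteristic from Proposition~\ref{prop:growth}, and then read off the deficiencies.

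\emph{Counting in a disc.} Fix $a\in\PP^1$ and a nonzero finite preimage $\alpha$ of $a$ under $S$. By \eqref{eq:lattice-fibres}, the corresponding $a$-points form the progression $z_0+2\pi iq\Z$ with $z_0=q\log\alpha$. Each point carries multiplicity $e_\alpha(S)$. The number of points of this progression in $|z|\le t$ is $t/(\pi q)+O(1)$, uniformly in $t$. Integrating $n(t)/t$ then gives a contribution
\[
 e_\alpha(S)\,\frac{r}{\pi q}+O(\log r)
\]
to $N(r,a;f)$ and $r/(\pi q)+O(\log r)$ to $\overline N(r,a;f)$. The convention at the origin only alters the constant.

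\emph{Summing over preimages.} Summing over the finitely many $\alpha\in\C^*$ with $S(\alpha)=a$ uses the degree identity
\[
 \sum_{\alpha\in\C^*,\,S(\alpha)=a}e_\alpha(S)=n-\epsilon_0(a)-\epsilon_\infty(a).
\]
This holds because the full fibre of $S$ over $a$ in $\PP^1$ has total multiplicity $n$, and $\epsilon_0,\epsilon_\infty$ remove exactly the points $0$ and $\infty$. This yields \eqref{eq:counting-asymptotic}, and counting distinct $\alpha$ yields \eqref{eq:truncated-asymptotic}.

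\emph{Deficiencies.} Divide both asymptotics by $T(r,f)=nr/(\pi q)+O(1)$ from Proposition~\ref{prop:growth}. The error terms vanish in the limit, so the $\limsup$ is a genuine limit and \eqref{eq:defect-formulas} follows. The quantity $\epsilon_0(a)+\epsilon_\infty(a)$ is nonzero only when $a\in\{S(0),S(\infty)\}$. Summing over $a$ gives $(e_0(S)+e_\infty(S))/n$; when $S(0)=S(\infty)$ both terms sit at one value, which is consistent with the formula.

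\emph{The truncated sum.} For $\sum\Theta$, I would use Riemann--Hurwitz for $S:\PP^1\to\PP^1$. Summing $n-\#S^{-1}(a)$ over all $a$ gives the total ramification $2n-2$. Since $\nu_a=\#S^{-1}(a)-[S(0)=a]-[S(\infty)=a]$, we have
\[
 \sum_a(n-\nu_a)=2n-2+2=2n,
\]
where the $+2$ accounts for the two punctures, whether or not they lie over the same value. Only finitely many $a$ contribute. Dividing by $n$ gives $\sum_a\Theta(a,f)=2$.

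The only step needing care is the bookkeeping at $0$ and $\infty$, namely showing that the punctures are subtracted exactly once each. The remaining steps are routine lattice-point counting.
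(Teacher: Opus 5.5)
Your proposal is correct and follows essentially the same route as the paper: count lattice points on the progressions $q\log\alpha+2\pi iq\Z$, which are disjoint because $e^{z/q}$ determines $\alpha$, use the degree identity $\sum_{\alpha\in\C^*,\,S(\alpha)=a}e_\alpha(S)=n-\epsilon_0(a)-\epsilon_\infty(a)$, divide by the growth formula, and close with Riemann--Hurwitz for $S$. The only difference is cosmetic and lies in the truncated sum. You add the two punctures back to $\sum_a(n-\#S^{-1}(a))=2n-2$, whereas the paper writes $n-\nu_a$ as ramification inside $\C^*$ plus the endpoint multiplicities; this is the same computation.
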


\begin{proof}
A progression $b+2\pi iq\Z$ contains
\begin{equation}
 \frac{r}{\pi q}+O(1)
 \label{eq:progression-count}
\end{equation}
points in the disc of radius $r$. Indeed, the admissible integers
form an interval of length
$\sqrt{r^2-(\operatorname{Re}b)^2}/(\pi q)$, up to an endpoint
error bounded independently of $r$. This length is
$r/(\pi q)+O(1/r)$ for large $r$.

The progressions in Proposition~\ref{prop:fibres} are disjoint
for distinct $\alpha$. Their total multiplicity is
\[
 d_a=\sum_{\substack{\alpha\in\C^*\\S(\alpha)=a}}
              e_\alpha(S)
     =n-\epsilon_0(a)-\epsilon_\infty(a).
\]
Thus their unintegrated counting functions, counted with and
without multiplicity, have slopes $d_a/(\pi q)$ and
$\nu_a/(\pi q)$, respectively. Integrating
\eqref{eq:progression-count} from a fixed radius to $r$ gives
\eqref{eq:counting-asymptotic} and
\eqref{eq:truncated-asymptotic}; the bounded counting error
integrates to $O(\log r)$. The standard convention for a point
at the origin gives the same conclusion.

Divide these formulas by Proposition~\ref{prop:growth}. This
proves \eqref{eq:defect-formulas}, including the existence of the
ratios as actual limits. Summing the first formula gives the
first equality in \eqref{eq:defect-sums}.

For the second equality, the degree formula for each fibre of
$S$ gives
\begin{equation}
 n-\nu_a=
 \sum_{\substack{\alpha\in\C^*\\S(\alpha)=a}}
         (e_\alpha(S)-1)
 +\epsilon_0(a)+\epsilon_\infty(a).
 \label{eq:truncated-defect-local}
\end{equation}
Only critical values and the two endpoint images can contribute.
Riemann--Hurwitz for $S:\PP^1\to\PP^1$ reads
\[
 \sum_{p\in\PP^1}(e_p(S)-1)=2n-2.
\]
Summing \eqref{eq:truncated-defect-local} over all values and
removing the ramification contributions of zero and infinity
therefore yields
\begin{align*}
 \sum_a(n-\nu_a)
 &=2n-2-(e_0(S)-1)-(e_\infty(S)-1)
                  +e_0(S)+e_\infty(S)\\
 &=2n.
\end{align*}
Division by $n$ proves the second equality.
\end{proof}

The two kinds of deficiency have different geometric sources.
The ordinary deficiencies arise from the two missing points of
the exponential cover. Truncation also records the ramification
of $S$ within $\C^*$. The last equality in
\eqref{eq:defect-sums} expresses the total ramification of
the rational map together with its two missing endpoints. It
realizes equality in the usual truncated defect bound from the
second main theorem~\cite{Hayman1964,Laine1993}.

\begin{corollary}
\label{cor:deficient-values}
Every nonconstant meromorphic solution of a non-Riccati equation
\eqref{eq:main-equation} has at least one and at most two
deficient values. Their deficiencies are positive rational
numbers. A value $a$ is omitted exactly when
\[
 \epsilon_0(a)+\epsilon_\infty(a)=n.
\]
For an entire nonconstant solution, infinity has deficiency one.
\end{corollary}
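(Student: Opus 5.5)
The plan is to combine Theorem~\ref{thm:main} with the formulas of Theorem~\ref{thm:defects}. By Theorem~\ref{thm:main}, a nonconstant solution can be written as $f=S(e^{z/q})$ with $S$ rational of degree $n\geq1$. I would take a primitive representation from Theorem~\ref{thm:primitive-intro}, although any representation works, since the deficiencies are intrinsic to $f$. Formula \eqref{eq:defect-formulas} gives $\delta(a,f)=(\epsilon_0(a)+\epsilon_\infty(a))/n$. This quantity is nonzero only when $a=S(0)$ or $a=S(\infty)$. Hence there are at most two deficient values, and each deficiency is a quotient of integers, so it is rational.

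To show there is at least one deficient value, evaluate at $a=S(0)$. There $\epsilon_0(a)=e_0(S)\geq1$, so $\delta(S(0),f)\geq 1/n>0$. The same argument applies at $S(\infty)$. If $S(0)=S(\infty)$, the two contributions add up at a single value, and this is still at least one deficient value. Positivity is therefore automatic, which accounts for the word ``positive'' in the statement.

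For the omitted-value criterion, use Proposition~\ref{prop:fibres}. The value $a$ is omitted exactly when $S$ has no preimage of $a$ in $\C^*$. Equivalently, the preimage degree $d_a=n-\epsilon_0(a)-\epsilon_\infty(a)$ appearing in the proof of Theorem~\ref{thm:defects} vanishes. The degree formula $\sum_{S(p)=a}e_p(S)=n$ turns this into the stated equation. The converse is the same computation read backwards: if $\epsilon_0(a)+\epsilon_\infty(a)=n$, then the full multiplicity of the fibre lies over $\{0,\infty\}$, so no $\alpha\in\C^*$ maps to $a$.

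For an entire solution, Corollary~\ref{cor:entire} shows that $S$ is a Laurent polynomial. Its only poles are therefore at $0$ and $\infty$, and together they carry total multiplicity $n$. Hence $\epsilon_0(\infty)+\epsilon_\infty(\infty)=n$, and $\delta(\infty,f)=1$. This also agrees with the fact that infinity is omitted.

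The main point needing care is that $n$ and $q$ depend on the chosen representation. The ratio in \eqref{eq:defect-formulas} is invariant: replacing $u$ by $u^k$ multiplies both $n$ and the endpoint multiplicities by $k$. Apart from this check, the corollary is bookkeeping on Theorem~\ref{thm:defects}.
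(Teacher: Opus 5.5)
Your proposal is correct and follows essentially the same route as the paper. Both read the deficiencies off \eqref{eq:defect-formulas}, get positivity from $e_0(S),e_\infty(S)\geq1$, characterize omitted values by $d_a=0$, and observe that a Laurent polynomial has all its poles at the two endpoints, with total multiplicity $n$. Your check of invariance under $u\mapsto u^k$ is harmless but not needed, since Theorem~\ref{thm:defects} holds for any representation and the deficiencies are defined intrinsically from $f$.
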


\begin{proof}
At least one endpoint image exists and each endpoint has
positive local degree. The formulas in Theorem~\ref{thm:defects}
give the assertions about the deficiencies. There are no
$a$-points precisely when $d_a=0$, which is the displayed
condition. If $f$ is entire, all poles of $S$ lie at the two
endpoints, with total multiplicity $n$.
\end{proof}

For example, take $S(u)=u^2+u$ and $q=1$. The endpoint images
are $0$ and infinity; the additional finite critical value is
$-1/4$. The formulas give
\[
 \begin{array}{c|ccc}
 a&0&-1/4&\infty\\ \hline
 \delta(a,f)&1/2&0&1\\
 \Theta(a,f)&1/2&1/2&1
 \end{array}
\]
and both quantities vanish at all other values. The function
$f=e^{2z}+e^z$ is an entire solution of the non-Riccati equation
\[
 f'=2e^{2z}+e^z+(f-e^{2z}-e^z)^3.
\]
Thus partial deficiency and the additional effect of truncation
already occur within the exact solution class of the main theorem.

\section{Examples and sharpness}
\label{sec:examples}

\subsection{Every denominator occurs}

For $q\geq2$, the function
\begin{equation}
 f_q(z)=e^{-z/q}
 \label{eq:all-denominators}
\end{equation}
satisfies
\begin{equation}
 f_q'=-\frac1q e^z f_q^{q+1}.
 \label{eq:all-denominators-equation}
\end{equation}
The rational function $R(t,w)=-tw^{q+1}/q$ is non-Riccati, and
the algebraic degree of $f_q$ over $\C(e^z)$ is exactly $q$.
Indeed, $\C(e^z,f_q)=\C(e^{z/q})$. Thus no fixed integer
denominator can suffice for all equations in the theorem.
The case $q=2$ is \eqref{eq:historical-example}.

The possible positive characteristic slopes also have no positive
lower bound independent of the equation. In fact, every positive
rational number occurs as $\pi\lim T(r,f)/r$. If $p,q$ are positive
integers, then $f=e^{pz/q}$ satisfies the non-Riccati equation
\begin{equation}
 f'=\frac pq e^{-2pz} f^{2q+1},
 \label{eq:rational-types}
\end{equation}
whose coefficient is rational in $e^z$. Its characteristic is
$pr/(\pi q)$.

\subsection{A non-Riccati equation with a denominator in the dependent variable}

Let $C_q$ be the polynomial determined by
\[
 C_0(w)=2,\qquad C_1(w)=w,\qquad
 C_{q+1}(w)=wC_q(w)-C_{q-1}(w).
\]
An induction gives
\begin{equation}
 C_q(u+u^{-1})=u^q+u^{-q}.
 \label{eq:chebyshev-identity}
\end{equation}
For $q\geq2$, take
\begin{equation}
 f(z)=e^{z/q}+e^{-z/q}.
 \label{eq:two-sided-example}
\end{equation}
Differentiating \eqref{eq:chebyshev-identity} yields
\begin{equation}
 f'=\frac{e^z-e^{-z}}{C_q'(f)}.
 \label{eq:chebyshev-equation}
\end{equation}
The corresponding $R(t,w)=(t-t^{-1})/C_q'(w)$ has a nonconstant
denominator in $w$ and is not of Riccati type. There is no common
factor to cancel over $\C(t)[w]$, because the numerator is a
nonzero element of the coefficient field. The solution itself is
entire; apparent singularities in the substituted quotient are
removable by the differentiated identity.

For $q=3$, the example reads
\[
 f^3-3f=e^z+e^{-z},\qquad
 f'=\frac{e^z-e^{-z}}{3(f^2-1)}.
\]
The integer deck period is exactly $q$, since the only
$q$th root of unity $\zeta$ satisfying
$\zeta u+\zeta^{-1}u^{-1}=u+u^{-1}$ is $\zeta=1$.
Theorem~\ref{thm:primitive-intro} gives algebraic degree $q$,
whereas Proposition~\ref{prop:growth} gives characteristic slope
$2/(\pi q)$.

\subsection{Why Riccati equations must be excluded}

The example recorded in~\cite[Question~6.2]{Gundersen2017} is
\begin{equation}
 f(z)=\frac{e^z}{z(e^z-1)}.
 \label{eq:riccati-counterexample}
\end{equation}
A direct logarithmic differentiation gives
\begin{equation}
 f'=\frac{1}{1-e^z}f+\frac{1-e^z}{e^z}f^2.
 \label{eq:counterexample-riccati}
\end{equation}
For every nonzero integer $q$,
\[
 f(z+2\pi iq)
 =\frac{e^z}{(z+2\pi iq)(e^z-1)}\not\equiv f(z).
\]
Thus $f$ cannot belong to $\Erad$. It has finite order; finite
order alone does not extend the main theorem to the Riccati class.

The linear equation $f'=e^zf$ gives the separate example
$f=e^{e^z}$, of infinite order. It is $2\pi i$-periodic, but the
descended function $u\mapsto e^u$ has an essential singularity at
infinity. This illustrates the independent role of algebraicity
in Lemma~\ref{lem:rational-descent}.

\subsection{A field in which the root obstruction fails}

Take an abstract field $L=\C(h,u)$ with
\[
 Dh=1,\qquad Du=u/d,\qquad t=u^d,
 \qquad d\geq2.
\]
The substitution $h=z$, $u=e^{z/d}$ gives an injective differential
homomorphism into $\MM(\C)$, and $Dt=t$. There is a regular
fibration over $\C(h)$ and its two-point description has degree
$d$. However, $\C(t)$ is not relatively algebraically closed in
$L$, because it has the proper algebraic extension $\C(u)$ inside
$L$. In particular, $L$ cannot have a presentation
$L=\C(t)(w)$ with $w$ transcendental over $\C(t)$.

This example identifies the exact obstruction excluded in
Proposition~\ref{prop:comparison}. A rational surface alone does
not make every rational function on it a rational-fibration
coordinate. The original presentation of the equation supplies
that additional information.

\section{The additive coordinate and Malmquist's theorem}
\label{sec:additive}

The same ramification argument also explains the difference between
rational and exponential coefficient fields. We give the comparison
as a recovery of the classical theorem, with its original attribution
to Malmquist~\cite{Malmquist1913}. It is not a separate claim of a new
Malmquist theorem.

\begin{lemma}
\label{lem:additive-branch}
In Lemma~\ref{lem:branch}, replace $Dt=t$ by $Dt=1$. Then every
finite branch value $a$ satisfies $Da=1$.
\end{lemma}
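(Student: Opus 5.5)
The argument is the proof of Lemma~\ref{lem:branch} with $Dt=t$ replaced by $Dt=1$; only the final reduction changes. Keep the setting of Lemma~\ref{lem:branch}: $\Gamma/K$ is a smooth projective geometrically integral curve, $D$ is regular on $\Gamma_{\overline K}$, and $t\in K(\Gamma)\setminus K$ now satisfies $Dt=1$. Let $a\in\overline K$ be a finite branch value of $t:\Gamma_{\overline K}\to\PP^1_{\overline K}$.

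\emph{Local factorization.} Choose a ramification point $p$ above $a$, with index $e\geq2$, and a uniformizer $\xi$ of $\OO_{\Gamma_{\overline K},p}$. As in \eqref{eq:local-ramification}, write $t-a=\xi^e v$ with $v$ a unit. The element $a$ lies in $\overline K$, so $Da$ is defined by the unique extension \eqref{eq:algebraic-derivative}. By Remark~\ref{rem:semilinear}, $Da$ is a constant in the local ring, and its residue at $p$ is $Da$ itself.

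\emph{Derivative lies in the maximal ideal.} Regularity \eqref{eq:geometric-regularity} places $D\xi$ and $Dv$ in $\OO_{\Gamma_{\overline K},p}$. The Leibniz rule then gives
\[
 D(t-a)=e\xi^{e-1}vD\xi+\xi^eDv\in(\xi).
\]
This uses only $e\geq2$, exactly as in the remark after Lemma~\ref{lem:branch}. We do not need $D$ to preserve the maximal ideal.

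\emph{Comparison.} On the other hand, $D(t-a)=Dt-Da=1-Da$, which is an element of $\overline K$. An element of $\overline K$ lying in the maximal ideal of the local ring must be zero, since a nonzero constant is a unit. Hence $1-Da=0$, that is, $Da=1$.

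The only point needing care is the use of the extension of $D$ to $\overline K(\Gamma)$ in the last two steps. It is well defined by \eqref{eq:algebraic-derivative} and Lemma~\ref{lem:base-change}, and the proof of Lemma~\ref{lem:branch} already uses it in the same way, so there is no real obstacle.
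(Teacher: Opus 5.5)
Your proposal is correct and takes the same approach as the paper's proof. You factor $t-a=\xi^e v$ at a ramification point, use regularity together with $e\geq2$ to place $D(t-a)$ in the maximal ideal, and compare with $D(t-a)=1-Da\in\overline K$ to get $Da=1$. Your remark that a nonzero element of $\overline K$ is a unit in the local ring simply spells out the paper's ``reduction modulo the maximal ideal.''
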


\begin{proof}
Use the same expression $t-a=\xi^e v$. Its derivative is in the
maximal ideal, while $D(t-a)=1-Da$ belongs to the coefficient field.
Reduction modulo the maximal ideal gives the assertion.
\end{proof}

Suppose in addition that a distinguished differential embedding
satisfies $\iota(t)=z$ and that $h,t$ are algebraically independent.
If a finite branch value $a$ existed, a local branch along the
trajectory would satisfy
\[
 a(H(z))=z+C.
\]
A polynomial relation for $a$ over $\C(h)$ would give a polynomial
relation between $h$ and $t+C$, which is impossible. Thus the map
$t$ on the geometric generic fibre can have at most one branch
value, namely infinity.

\begin{lemma}
\label{lem:one-point}
A finite morphism from a smooth projective connected curve to
$\PP^1$ over an algebraically closed field of characteristic zero
has degree one if it is unramified outside at most one point.
\end{lemma}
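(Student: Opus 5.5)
The plan is to apply Riemann--Hurwitz exactly as in the proof of Proposition~\ref{prop:two-point}, with one branch value in place of two. Let $\phi:\Gamma\to\PP^1$ be the finite morphism, of degree $d\geq1$, and let $g$ be the genus of $\Gamma$. Because the characteristic is zero, $\phi$ is separable and all ramification is tame. Riemann--Hurwitz therefore takes the form
\[
 2g-2=-2d+\sum_{p\in\Gamma}(e_p-1),
\]
as in \cite[Chapter~IV, Section~2]{Hartshorne1977}.

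By hypothesis, every ramification point lies above a single point $b\in\PP^1$; if there is no ramification at all, pick $b$ arbitrarily. Let $r\geq1$ be the number of points of $\Gamma$ over $b$. These points are geometric points, since the base field is algebraically closed. The degree formula $\sum_{p\mapsto b}e_p=d$ shows that the total ramification equals $d-r$. Substituting this gives
\[
 2g-2=-2d+(d-r)=-d-r.
\]

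Now use $g\geq0$ to get $d+r\leq2$. Since $d\geq1$ and $r\geq1$, this forces $d=r=1$, and then $g=0$. So the morphism has degree one.

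I do not expect a real obstacle here. The only care needed is to note that connectedness of $\Gamma$ makes the genus nonnegative and the formula valid. Connectedness together with smoothness means $\Gamma$ is irreducible, so $\Gamma$ has a single genus and a single degree. The case where $\phi$ is unramified everywhere is absorbed by the choice of $b$, and there the same computation gives the contradiction $2g-2=-2d$ unless $d=1$. An alternative route would use simple connectivity of $\PP^1$ minus one point, but the algebraic computation avoids invoking topology.
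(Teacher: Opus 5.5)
Your proposal is correct and follows the paper's proof essentially step for step. You choose the sole branch value (or an arbitrary point in the unramified case), count the $r$ points above it so that the total ramification is $d-r$, and Riemann--Hurwitz gives $2g-2=-d-r$, forcing $d=r=1$ and $g=0$.
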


\begin{proof}
If the map has a branch value, choose the sole possible one; otherwise
choose an arbitrary point of the target. Let the degree be $d$ and let
$r\geq1$ be the number of points above the chosen value. The total
ramification is $d-r$ (zero in the unramified case), so
Riemann--Hurwitz gives
\[
 2g-2=-2d+(d-r)=-d-r.
\]
Since the left side is at least $-2$, it follows that $d+r\leq2$.
Thus $d=r=1$ and $g=0$.
\end{proof}

\begin{theorem}
\label{thm:additive}
If $R\in\C(t,w)$ is not of Riccati type, then every global
meromorphic solution of $f'=R(z,f)$ is rational.
\end{theorem}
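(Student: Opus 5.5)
The plan is to repeat the algebraicity argument of Sections~\ref{sec:guillot}--\ref{sec:comparison} with $Dt=t$ replaced by $Dt=1$, and then to use the classical fact that algebraic meromorphic functions over $\C(z)$ are rational. First suppose a solution $f$ is transcendental over $\C(z)$. Then the substitution $\iota(t)=z$, $\iota(w)=f$ gives an injective differential homomorphism $L=\C(t,w)\to\MM(\C)$ for $D=\partial_t+R\partial_w$, and the trajectory $z\mapsto(z,f(z))$ is Zariski dense. Lemma~\ref{lem:maximal} applies verbatim, since its proof uses only that the trajectory is a holomorphic map of $\C$ with dense image. Theorem~\ref{thm:guillot} and Lemma~\ref{lem:base-change} then produce $K=\C(h)$, relatively algebraically closed in $L$, with $D(K)\subset K$ and $D$ regular on the geometric generic fibre. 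By Lemma~\ref{lem:autonomous} we may normalize so that $Dh=1$ or $Dh=\lambda h$.

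In the dependent case, $h$ is algebraic over $\C(t)$. Lemma~\ref{lem:relative} and relative closedness of $K$ give $K=\C(t)$, exactly as in the proof of Proposition~\ref{prop:comparison}. Lemma~\ref{lem:riccati} then shows that $R$ is of Riccati type, a contradiction.

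In the independent case, Lemma~\ref{lem:additive-branch} and the paragraph after it show that $t$ has no finite branch value on the geometric generic fibre. Lemma~\ref{lem:one-point} then says that $t$ has degree one on that fibre. Thus $L=K(t)$ geometrically, and since $t\in L$ already, $L=\C(h,t)$. The fibre over $K$ is therefore $\PP^1_K$ with coordinate $t$, while $\C(t)$-wise we have $L=\C(t)(h)$, and $Dh\in\C(h)$ is a function of $h$ alone. Here I must check that $Dh$ has degree at most two in $h$. Since $Dh=1$ or $Dh=\lambda h$, this is immediate. Corollary~\ref{cor:change-generator} over the base $\C(t)$, with generator $h$ and $K(h)$ replaced by $\C(t)(h)=\C(t)(w)$, then shows that $Dw$ is Riccati in $w$, again a contradiction. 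The independent case is cleaner than in the exponential setting because no Kummer form or divisibility argument is needed: the degree is forced to be one.

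Hence $f$ is algebraic over $\C(z)$. The last step shows that a meromorphic function on $\C$ algebraic over $\C(z)$ is rational. This is the analogue of Lemma~\ref{lem:rational-descent} with only one puncture, at $\infty$. From $\sum a_j(z)f^j=0$ with polynomial $a_j$ and $a_n\neq0$, for $|z|$ large the leading coefficient has no zeros, so $f$ has no poles there. The root bound gives $|f(z)|\le C|z|^N$, so $f$ has at most a pole at $\infty$ and extends meromorphically to $\PP^1$, hence is rational.

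The main obstacle is the independent case. I need to make sure the degree-one conclusion over $\overline K$ descends to $L=\C(h,t)$, which holds because $[L:K(t)]$ equals the geometric degree for a geometrically integral curve. I also need the branch-value contradiction of the paragraph after Lemma~\ref{lem:additive-branch} to be applied with the normalized $H$, as in Lemma~\ref{lem:no-finite-branch}.
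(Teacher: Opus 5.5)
Your proposal is correct and follows the paper's proof essentially step for step: a Guillot fibration for a transcendental solution, the dependent case closed by the two relative-closedness arguments and Lemma~\ref{lem:riccati}, the independent case closed by the additive branch exclusion, Lemma~\ref{lem:one-point} (giving $L=\C(t)(h)$ with $Dh=1$ or $Dh=\lambda h$) and Corollary~\ref{cor:change-generator}, and finally the polynomial root bound to show that a meromorphic function algebraic over $\C(z)$ is rational. Your added remark that geometric degree one gives $[L:K(t)]=1$ for a geometrically integral curve makes explicit a point the paper leaves implicit.
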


\begin{proof}
Suppose first that $f$ is transcendental over $\C(z)$. The
embedding $t\mapsto z$, $w\mapsto f$ and the derivation
\[
 D=\partial_t+R(t,w)\partial_w
\]
produce a dense trajectory on $\PP^1\times\PP^1$. The argument
of Section~\ref{sec:guillot} gives a preserved fibration with base
$K=\C(h)$ and a regular geometric generic curve.

If $h,t$ are dependent, the two relative algebraic closedness
arguments give $K=\C(t)$, and Lemma~\ref{lem:riccati} applies
directly. If they are independent, additive branch compatibility
and the preceding exclusion leave at most one branch value.
Lemma~\ref{lem:one-point} gives degree one, hence $L=K(t)$.
By Lemma~\ref{lem:autonomous}, a constant fractional linear change
of $h$ gives $Dh=1$ or $Dh=\lambda h$. In either case,
$L=\C(t)(h)$ is a rational differential field with a regular
derivation in $h$. Corollary~\ref{cor:change-generator} forces
Riccati type in $w$, a contradiction.

Thus $f$ is algebraic over $\C(z)$. After clearing denominators, choose
a polynomial relation for $f$ with coefficients in $\C[z]$. For all
sufficiently large $|z|$, its leading coefficient is nonzero and the
polynomial root bound used in Lemma~\ref{lem:rational-descent} gives
$|f(z)|\leq C|z|^N$ for some $C,N>0$. Hence infinity is at most a pole.
Since $f$ is already meromorphic at every finite point, it extends
meromorphically to $\PP^1$ and is therefore rational.
\end{proof}

For the additive flow there is one omitted value, infinity, and the
cover is forced to have degree one. For the multiplicative flow
there are two omitted values, and the intermediate cover can have
the form $u\mapsto u^d$. This accounts for the finite root
extensions in the exponential-coefficient theorem. The subsequent
descent is controlled by the deck group of $\C^*$, rather than by
any growth restriction on the original solution.

\enlargethispage{3\baselineskip}

\section*{Acknowledgement}
This work was supported by Guangdong Basic and Applied Basic Research Foundation
(No.~2022A1515110967 and No.~2023A1515011809). The authors acknowledges the use of AI-assisted tools in manuscript preparation and assumes full responsibility for all mathematical statements and proofs.

\end{document}